\DeclareMathOperator{\E}{\mathbb{E}}
\DeclareMathOperator{\Ber}{Ber}
\DeclareMathOperator{\Binom}{Binom}
\renewcommand{\Pr}[2][]{\mathbb{P}_{#1} \left\{ #2 \rule{0mm}{3mm}\right\}}
\DeclarePairedDelimiter \abs{\lvert}{\rvert} 
\DeclarePairedDelimiter \norm{\lVert}{\rVert}
\DeclarePairedDelimiterX \ip[2]{\langle}{\rangle}{#1,#2}
\DeclarePairedDelimiterXPP \Prob[1]{\mathbb{P}}\{\}{}{
\DeclarePairedDelimiterXPP \Probevent[1]{\mathbb{P}}(){}{
\def \R {\mathbb{R}}
\def \d {\delta}
\def \l {\lambda}
\def \tran {\mathsf{T}}
\def \psitwo {{\psi_2}}
\def \rad {\mathrm{rad}}
\theoremstyle{remark}
\begin{document}



\section{Introduction}  \label{sec: intro}

Let $A \in \R^{m\times n}$ be a random matrix and $T \subset \R^n$ a fixed set.  We study the question: \textit{Under what circumstances is $A$ well conditioned when restricted to $T$?}  More technically, we study the quantity
\[
\delta \coloneqq \sup_{x\in T} \abs[\Big]{\norm*{Ax}_2 - \norm*{x}_2}.
\]
Observe that the \textit{isometry constant} $\delta$ is the smallest constant such that
$$
 \norm*{x}_2 - \delta \leq \norm*{Ax}_2 \leq \norm*{x}_2 + \delta \qquad \mbox{for all } x \in T.
$$
If $\delta$ is small, $A$ embeds $T$ into $\R^m$ without significantly distorting Euclidean norms. If $T$ is replaced by the Minkowski difference $T-T$, then the embedding approximately preserves Euclidean distances between points in $T$, so we can  
think of $A$ as a \textit{near isometry} on $T$. Random matrices that are near isometries are useful in a variety of applications, including compressed sensing \cite{CS-book, eldar2012compressed}, dimension reduction as in the Johnson-Lindenstrauss lemma \cite{johnson1984extensions}, and randomized sketching \cite{yang2017randomized, 2015pilanci}.  The subset $T$ of interest varies by application.   When $T$ belongs to the sphere and $\delta < 1$, the null space of $A$ is a random subspace that must not intersect $T$, and thus the study of $\delta$ also has interesting implications regarding the behaviour of random subspaces \cite{jeong2022sub, LMPV}.  

If $A$ is a random Gaussian matrix, the size of $\delta$ is well understood \cite{Schechtman2006}.  It is controlled by the Gaussian width or Gaussian complexity of $T$ \cite{hdp-book}.

\begin{definition}[The Gaussian width and Gaussian complexity of a set]
    The \textit{Gaussian width} $w(T)$ and the \textit{Gaussian complexity} $\gamma(T)$ of a set $T \subset \R^n$ are defined as follows:
    $$
    w(T) \coloneqq \E \sup_{x \in T} \ip{g}{x}, \quad
    \gamma(T) \coloneqq \E \sup_{x \in T} \abs{\ip{g}{x}} 
    $$
    where $g$ is a standard normal random vector in $\R^n$, i.e. $g \sim N(0, I_n)$.
\end{definition}

These two quantities are identical for symmetric sets, and in general we have $w(T) + \rad(T) \asymp \gamma(T)$, where $\rad(T):= \sup_{x\in T} \norm{x}_2$. For more properties of Gaussian width and complexity, see \cite{hdp-book}.

To extend our discussion to more general random matrix ensembles, recall that the \textit{subgaussian norm} of a random variable $Z$ is defined as 
$$
\norm*{Z}_{\psi_2} := \inf\left\{t > 0: \E \exp(Z^2/t^2) \leq 2\right\}
$$
and $Z$ is called \textit{subgaussian} if $\norm*{Z}_{\psi_2} < \infty$. Further, the subgaussian norm of a random vector $V \in \R^n$ is defined as 
\begin{equation} \label{eq: sub-gauss vector}
    \norm{V}_{\psi_2}:=\sup_{x \in S^{n-1}} \norm{\ip{V}{x}}_{\psi_2}
\end{equation}
i.e.~it is the largest subgaussian norm of one-dimensional marginals of $V$. For basic introduction to subgaussian distributions, see \cite{hdp-book}.

Matrices with subgaussian entries (or rows or columns) represent a notable generalization beyond the Gaussian case. In particular, this generalization includes discrete and/or sparse random matrices, which can be used to accelerate computations \cite{kane2014sparser} and are essential in applications like group testing \cite{cohen2021multi} and wireless network activity detection \cite{kueng2017robust}.

When $A$ has independent, isotropic,\footnote{Recall that a random vector $V\in \R^n$ is called \textit{isotropic} if $\E V V^\tran = I_n$ where $I_n$ is the identity matrix.} and subgaussian rows, the quantity $\delta$ has similar guarantees to the Gaussian case as shown in the following theorem. 

\begin{theorem}[Corollary 1.2 in \cite{jeong2022sub}] \label{thm: old result}
    Let $A$ be an $m \times n$ matrix whose rows are independent, isotropic, subgaussian random vectors in $\R^n$ with subgaussian norms bounded by $K$. Then, for any subset $T \subset \R^n$, we have
    $$
    \E \sup_{x \in T} \abs[\Big]{\norm*{Ax}_2 - \sqrt{m}\norm*{x}_2} 
    \le C K \sqrt{\log K} \, \gamma(T).
    $$
    Moreover, for any $u \geq 0$, with probability at least $1 - 3e^{-u^2}$, we have
    $$
    \sup_{x \in T} \abs[\Big]{\norm*{Ax}_2 - \sqrt{m}\norm*{x}_2} 
    \le C K \sqrt{\log K} \left[w(T) + u\cdot \rad(T)\right].
    $$
    Here and throughout the paper, $C>0$ denotes an absolute constant.
\end{theorem}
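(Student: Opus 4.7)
The plan is to apply generic chaining to the random process $Z_x := \norm{Ax}_2 - \sqrt{m}\,\norm{x}_2$ indexed by $x \in T$. Since the rows are isotropic, $\E \norm{Ax}_2^2 = m \norm{x}_2^2$, so the squared fluctuation
\[
\norm{Ax}_2^2 - m\norm{x}_2^2 \;=\; \sum_{i=1}^m \bigl(\ip{A_i}{x}^2 - \norm{x}_2^2\bigr)
\]
is a sum of independent centered subexponentials, which is easier to analyze than $Z_x$ directly. The idea is to derive an increment estimate for the squared process, then divide by $\norm{Ax}_2 + \sqrt{m}\,\norm{x}_2 \asymp \sqrt{m}\,\norm{x}_2$ on the high-probability event $\norm{Ax}_2 \gtrsim \sqrt{m}\,\norm{x}_2$ (which holds with probability $1-e^{-cm/K^2}$ by pointwise Bernstein) to obtain an increment for $Z_x$.

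For the squared increment, fix $x,y \in T$. Subgaussianity gives $\norm{\ip{A_i}{x-y}}_{\psi_2}\leq K\norm{x-y}_2$ and $\norm{\ip{A_i}{x+y}}_{\psi_2}\lesssim K\,\rad(T)$, hence by the factorization $u^2 - v^2 = (u-v)(u+v)$ together with $\norm{UV}_{\psi_1}\leq \norm{U}_{\psi_2}\norm{V}_{\psi_2}$,
\[
\norm{\ip{A_i}{x}^2 - \ip{A_i}{y}^2}_{\psi_1} \leq C K^2 \norm{x-y}_2\, \rad(T).
\]
Bernstein's inequality then yields a mixed $\psi_2/\psi_1$ tail for $Z_x - Z_y$, with $\psi_2$-scale $K\norm{x-y}_2$ and $\psi_1$-scale $K^2\norm{x-y}_2/\sqrt{m}$. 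Dirksen's generic chaining theorem for such mixed-tail processes then bounds $\E\sup_{x\in T}\abs{Z_x}$ by $K\,\gamma_2(T,\norm{\cdot}_2) + (K^2/\sqrt{m})\,\gamma_1(T,\norm{\cdot}_2)$. By Talagrand's majorizing measures theorem $\gamma_2(T,\norm{\cdot}_2)\asymp w(T)$; the $\gamma_1$ term, once rescaled by $\rad(T)$, is dominated by $\gamma_2$. Together these yield a Gaussian-complexity bound, and the high-probability statement follows from the same chaining plus Talagrand's concentration inequality for suprema of empirical processes.

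The main obstacle is sharpening the $K$-dependence from $K^2$ (the naive Bernstein cost) down to $K\sqrt{\log K}$. This requires a truncation at threshold $\tau\asymp K\sqrt{\log K}\,\rad(T)$: decompose $\ip{A_i}{x} = \ip{A_i}{x}\,\ind_{\{\abs{\ip{A_i}{x}}\leq \tau\}} + \text{remainder}$, chain the bounded truncated part (which costs a single factor of $K$ times the truncation level $\sqrt{\log K}$), and control the remainder by a union bound over the chaining net using the subgaussian tail $\Pr{\abs{\ip{A_i}{x}} > \tau} \leq 2e^{-\tau^2/(CK^2\rad(T)^2)}$. The threshold $K\sqrt{\log K}$ is precisely what balances these two contributions and yields the stated $CK\sqrt{\log K}\,\gamma(T)$ bound.
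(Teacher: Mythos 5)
This paper states this result as an external citation (Corollary 1.2 of \cite{jeong2022sub}) and does not give a proof, so there is no in-paper argument to compare against; the comparison has to be to the proof in \cite{jeong2022sub} (which builds on the framework of \cite{LMPV}). Your high-level plan -- expand $\norm{Ax}_2^2 - m\norm{x}_2^2$ as a sum of independent centered subexponentials $\ip{A_i}{x}^2 - \norm{x}_2^2$, get a $\psi_1$ increment bound $CK^2\norm{x-y}_2\rad(T)$ by factoring $u^2-v^2=(u-v)(u+v)$, apply Bernstein, divide by $\norm{Ax}_2+\norm{Ay}_2$, and then chain -- is indeed the \cite{LMPV} route, and it correctly produces a $K^2$ bound. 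But several steps then leap past the actual difficulty. You assert a $\psi_2$-scale of $K\norm{x-y}_2$ for $Z_x-Z_y$, yet the $\psi_1$ norm you just computed is $K^2\norm{x-y}_2\rad(T)$, so the variance proxy in Bernstein is a priori of order $K^4$; the $\psi_2$ scale that falls out is $K^2$, not $K$. Separately, the claim that the $\gamma_1$ term ``is dominated by $\gamma_2$'' is not a general fact (one always has $\gamma_1(T)\gtrsim\gamma_2(T)$, not the reverse), and making that comparison go through would require an additional argument tying $\gamma_1$ to $m$, which you do not provide.

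The genuine gap is in the mechanism for improving $K^2$ to $K\sqrt{\log K}$. Your proposed fix -- truncate $\ip{A_i}{x}$ at a single fixed threshold $\tau\asymp K\sqrt{\log K}\rad(T)$, chain the truncated part, and union-bound the remainder over the chaining net -- does not close. The nets at scale $2^{-n}$ have cardinality roughly $2^{2^n}$, so a union bound at scale $n$ demands failure probability $2^{-2^n}$; a fixed $\tau$ gives a fixed failure probability per point, so it cannot control all scales simultaneously unless $\tau$ grows with $n$. And the assertion that the truncated part ``costs a single factor of $K$ times $\sqrt{\log K}$'' is not derived from anything. The mechanism in \cite{jeong2022sub} is different and pointwise: for a subgaussian $Z$ with $\norm{Z}_{\psi_2}\le K$ and the additional normalization $\E Z^2 = 1$ (this is where isotropy enters), one has the sharper fourth-moment bound $\E Z^4 \lesssim K^2\log K$ rather than the generic $K^4$. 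Feeding this improved variance into Bernstein, and then square-rooting (the $\psi_1$ tail of the squared increment becomes a $\psi_2$ tail of the unsquared one), yields a clean subgaussian increment $\norm{Z_x - Z_y}_{\psi_2}\lesssim K\sqrt{\log K}\,\norm{x-y}_2$. That sidesteps mixed-tail chaining and the $\gamma_1$ issue entirely, after which Talagrand's comparison inequality finishes, exactly as in Theorem~\ref{theorem_majorm} of the present paper. Your outline has the right ingredients (the squared process, the $\psi_1$ factorization, the $K\sqrt{\log K}$ threshold) but misses the fourth-moment refinement that is the actual source of the $\log K$, and what is written in its place would not yield the stated bound.
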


\medskip

This theorem builds directly on the work in \cite{LMPV}, which presents the same result but with $K \sqrt{\log K}$ replaced by $K^2$. In fact, the $K \sqrt{\log K}$ dependence is known to be optimal in this setting \cite{jeong2022sub}. Similar preceding results also appear in the papers \cite{2005Klartag, 2007Mendelson, 2015dirksen}; see \cite[Section 3]{LMPV} for a discussion of their relationship.

In this paper, we develop an analogous theory for matrices with independent subgaussian columns.  While there are several studies of random matrices with independent subgaussian columns restricted to specific subsets---such as the set of sparse vectors \cite{adamczak2011restricted}, finite sets \cite{cohen2018simple}, or the entire sphere \cite{V-rmt}---we are only aware of one result that allows arbitrary sets $T$, due to  D.~Bartl and S.~Mendelson \cite{bartl2022random}.  We now state our main theorem and corollary, and then compare to the results in \cite{bartl2022random} in Remark \ref{rem: comparison} below.



\begin{theorem}			\label{thm: main}
    Let $A$ be an $m \times n$ matrix whose columns $A_i$ are independent, mean zero, subgaussian random vectors in $\R^m$ satisfying $\norm*{A_i}_2 = 1$ almost surely and $\norm*{A_i}_{\psi_2} \leq K$. Then, for any subset $T \subset \R^n$, we have
    $$
    \E \sup_{x \in T} \abs[\Big]{\norm*{Ax}_2 - \norm*{x}_2} \le C K \gamma(T).
    $$
    Moreover, for any $u \geq 0$, with probability at least $1 - 3e^{-u^2}$, we have
    $$
    \sup_{x \in T} \abs[\Big]{\norm*{Ax}_2 - \norm*{x}_2} \le C K \left[w(T) + u\cdot \rad(T)\right].
    $$ 
\end{theorem}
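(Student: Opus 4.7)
The plan is to deduce the result from a sharp subgaussian concentration estimate for the quadratic chaos $\|Az\|_2^2 - \|z\|_2^2$, combined with Talagrand's generic chaining for subgaussian processes. The central estimate is: for every $z \in \R^n$,
\[
\bigl\| \|Az\|_2^2 - \|z\|_2^2 \bigr\|_{\psi_2} \;\le\; C K \|z\|_2^2. \qquad (\star)
\]
Granting $(\star)$, the pointwise concentration of $F(x) := \|Ax\|_2 - \|x\|_2$ follows from the elementary identity $\bigl|\|Az\|_2 - \|z\|_2\bigr| \le \bigl|\|Az\|_2^2 - \|z\|_2^2\bigr|/\|z\|_2$, giving $\|F(z)\|_{\psi_2} \le CK\|z\|_2$. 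A bilinear version of $(\star)$, obtained via the polarization identity $\|Ax\|_2^2 - \|Ay\|_2^2 = \langle A(x-y), A(x+y)\rangle$ together with a rescaling argument, supplies the subgaussian increment bound $\|F(x) - F(y)\|_{\psi_2} \le CK\|x-y\|_2$ (after treating well-separated $x,y$ via the pointwise bound applied at both endpoints). Talagrand's generic chaining in this $\psi_2$-metric then produces $\E \sup_{x\in T}|F(x)| \le CK\gamma(T)$, and its concentration form yields the high-probability statement with the $w(T) + u\cdot\rad(T)$ split.

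To prove $(\star)$, I write the off-diagonal chaos as a martingale with respect to the filtration $\mathcal F_j := \sigma(A_1, \ldots, A_j)$:
\[
\|Az\|_2^2 - \|z\|_2^2 \;=\; \sum_{i \neq j} z_i z_j \langle A_i, A_j\rangle \;=\; 2 \sum_{j=2}^n z_j \langle A_j, V_{j-1}\rangle, \qquad V_{j-1} := \sum_{i<j} z_i A_i.
\]
Each martingale difference $M_j := z_j \langle A_j, V_{j-1}\rangle$ is conditionally subgaussian with norm $\le K|z_j|\|V_{j-1}\|_2$ (thanks to $\|A_j\|_{\psi_2} \le K$) and is almost surely bounded by $|z_j|\|V_{j-1}\|_2$ (thanks to $\|A_j\|_2 = 1$). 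A Freedman/Pinelis-type subgaussian martingale concentration, applied with predictable quadratic variation $K^2\sum_j z_j^2 \|V_{j-1}\|_2^2$ (whose expectation equals $\tfrac12 K^2\|z\|_2^4$), then yields $(\star)$. The random variance proxy is itself a second-order chaos whose subexponential fluctuations can be controlled either by applying $(\star)$ inductively to subvectors of $z$ (a self-improving bootstrap) or by a direct Hanson--Wright-style bound that exploits the unit-column structure.

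The main obstacle is the martingale step $(\star)$: because the predictable quadratic variation is random and of the same order as the chaos being bounded, one cannot invoke a black-box subgaussian martingale inequality, and the estimate must be closed by a self-improving induction that avoids a spurious $\sqrt{\log K}$-type loss. Once $(\star)$ is in hand, the polarization-and-chaining scheme above finishes the proof by careful but routine computation, and the high-probability conclusion follows from the standard tail form of the generic chaining bound applied in the Euclidean metric scaled by $CK$.
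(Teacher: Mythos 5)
Your overall architecture matches the paper's: reduce to a subgaussian increment bound for $Z_x = \|Ax\|_2 - \|x\|_2$, then invoke Talagrand's comparison/chaining, and you correctly identify the essential role of $\|A_i\|_2 = 1$ in annihilating the diagonal of the chaos (both for $\|Ax\|_2^2 - 1$ and, via the polarization $u=x+y$, $v=x-y$, for the increment). However, your central estimate $(\star)$ is false: $S := \|Ax\|_2^2 - 1$ is a degree-two chaos and, even under all the hypotheses of the theorem, has only a Bernstein (mixed subgaussian/subexponential) tail, not a pure subgaussian one. Concretely, take $m=2$ and let each column $A_i$ equal $(\pm 1,0)$ or $(0,\pm 1)$ with probability $1/4$ each; then $K \asymp 1$, and for $x = n^{-1/2}(1,\dots,1)$ one has $\|Ax\|_2^2 = n^{-1}(W_1^2+W_2^2)$ with $W_1,W_2$ sums of $\approx n/2$ bounded signs, so by the CLT $\|Ax\|_2^2 \approx \tfrac12\chi_2^2$ and $\mathbb{P}\{S \ge t\} \asymp e^{-t}$ for large $t$. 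Thus $\|S\|_{\psi_2}$ is not $O(K)$, and no martingale bootstrap can make $(\star)$ true. What is true, and what the paper proves (Lemmas~\ref{lem: mgf}--\ref{lem: bernstein}), is $\mathbb{P}\{|S|\ge t\}\le 2\exp\big(-C(t^2\wedge t)/K^2\big)$, obtained not by a martingale argument but by decoupling ($\mathbb{E}\,e^{\lambda S} \le \mathbb{E}\,e^{4\lambda\langle Ax,A'x\rangle}$), integrating out $A'$ to get $\mathbb{E}\,e^{\lambda S}\le\mathbb{E}\,e^{C\lambda^2K^2\|Ax\|_2^2}$, substituting $\|Ax\|_2^2 = S+1$, and closing the resulting recursion with Jensen.

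This error also breaks your bridge to the un-squared process. The inequality $\big|\|Az\|_2-\|z\|_2\big| \le \big|\|Az\|_2^2-\|z\|_2^2\big|/\|z\|_2$ you invoke only transfers a tail of the same shape; feeding it the correct mixed tail for $S$ yields only a mixed tail for $\|Ax\|_2 - 1$, which is not enough for the $\psi_2$-chaining step. The paper instead uses the sharper two-sided observation that, for $z\ge 0$, $|z-1|\ge\delta$ forces $|z^2-1|\ge\delta\vee\delta^2$ (Proposition~\ref{prop: conc of norm}); this is exactly what upgrades a Bernstein tail on $S$ to a genuinely subgaussian tail on $\|Ax\|_2-1$, and the analogous case split in Lemma~\ref{lem: increments on sphere} (triangle inequality for $t\ge 2$, division by $\|Ax\|_2+\|Ay\|_2$ for $t<2$) is what converts the mixed tail on $\Delta$ into the $\psi_2$-increment bound. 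In short: the skeleton is right, but $(\star)$ as stated is too strong to be provable, the martingale-plus-bootstrap route toward it is not going to close, and the elementary comparison you use to pass from squared to un-squared process needs to be replaced by the $\delta\vee\delta^2$ trick.
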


\medskip
A trivial rescaling argument yields the following:

\begin{corollary} \label{cor: lambda cols}
    Let $A$ be an $m \times n$ matrix whose columns $A_i$ are independent, mean zero, subgaussian random vectors in $\R^n$ satisfying $\norm*{A_i}_2 = \lambda$ almost surely and $\norm*{A_i}_{\psi_2} \leq K$. Then, for any subset $T \subset \R^n$, we have
    $$
    \E \sup_{x \in T} \abs[\Big]{\norm*{Ax}_2 - \lambda \norm*{x}_2} \le C K \gamma(T).
    $$
    Moreover, for any $u \geq 0$, with probability at least $1 - 3e^{-u^2}$, we have
    $$
    \sup_{x \in T} \abs[\Big]{\norm*{Ax}_2 - \lambda \norm*{x}_2} \le C K \left[w(T) + u\cdot \rad(T)\right].
    $$ 
\end{corollary}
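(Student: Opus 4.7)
The plan is a straightforward rescaling reduction to Theorem~\ref{thm: main}. Assume without loss of generality that $\lambda > 0$ (if $\lambda = 0$ then $A = 0$ and the corollary is trivial). Define the normalized matrix $B := \lambda^{-1} A$, which is $m \times n$ with columns $B_i = \lambda^{-1} A_i$. I would verify that $B$ satisfies the hypotheses of Theorem~\ref{thm: main}, apply that theorem to $B$, and then multiply the resulting conclusion by $\lambda$.

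First, checking the hypotheses: the columns $B_i$ remain independent and mean zero; they satisfy $\norm*{B_i}_2 = 1$ almost surely; and by positive homogeneity of the subgaussian norm (immediate from the definition~\eqref{eq: sub-gauss vector} applied to scalar marginals), $\norm*{B_i}_{\psi_2} = \lambda^{-1} \norm*{A_i}_{\psi_2} \leq K/\lambda$. Second, applying Theorem~\ref{thm: main} to $B$ with the parameter $K$ replaced by $K/\lambda$ yields
\[
\E \sup_{x \in T} \Big| \norm*{Bx}_2 - \norm*{x}_2 \Big| \;\leq\; C (K/\lambda)\, \gamma(T),
\]
together with the analogous $1 - 3e^{-u^2}$ probability bound. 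Third, since $\norm*{Bx}_2 = \lambda^{-1} \norm*{Ax}_2$, we have
$\lambda \big| \norm*{Bx}_2 - \norm*{x}_2 \big| = \big| \norm*{Ax}_2 - \lambda \norm*{x}_2 \big|$,
so multiplying both sides of the displayed inequality by $\lambda$ cancels the $\lambda^{-1}$ on the right and produces exactly the bounds claimed in the corollary, with constant $C K$.

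There is essentially no obstacle; the only step warranting a moment's care is confirming that the subgaussian norm of a random vector rescales cleanly as $\norm*{V/\lambda}_{\psi_2} = \lambda^{-1} \norm*{V}_{\psi_2}$, with no stray absolute constant that would inflate the final bound. This is guaranteed because the supremum in~\eqref{eq: sub-gauss vector} passes linearly through the corresponding scalar identity $\norm*{Z/\lambda}_{\psi_2} = \lambda^{-1} \norm*{Z}_{\psi_2}$, which follows directly from the definition of the scalar $\psi_2$-norm.
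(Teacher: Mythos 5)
Your proof is correct and is precisely the ``trivial rescaling argument'' the paper alludes to when stating the corollary: substitute $B = \lambda^{-1}A$ into Theorem~\ref{thm: main} with subgaussian parameter $K/\lambda$, then multiply through by $\lambda$. The scalar homogeneity of the $\psi_2$-norm and the edge case $\lambda = 0$ are both handled appropriately.
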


\medskip

\begin{remark}[Why is column normalization needed?]
In contrast to the previous result (Corollary~\ref{thm: old result}), here we require a hard normalization of columns:
\begin{equation}	\label{eq: columns normalized}
	\norm{A_i}_2 = \lambda \quad \text{a.s. for all } i.
\end{equation}
One cannot remove this assumption even for $n=1$, or replace it with a weaker assumption of isotropy as in Theorem~\ref{thm: old result}. This is because a mean-zero isotropic subgaussian random vector $A_1$ might vanish with probability $1/2$.  For example, if $\delta\sim \mbox{Bern}(1/2)$ and $X$ is (independently) chosen uniformly from $\sqrt{m}S^{m-1}$, then $\delta X$ is a subgaussian vector with $O(1)$ subgaussian norm.  If the columns of $A$ are i.i.d.~copies of $\delta X$ and $T = \{e_1\}$, the first canonical basis vector, then
\[
\sup_{x \in T} \abs[\Big]{\norm*{Ax}_2 - \lambda \norm*{x}_2} = |\delta \sqrt{m} - \lambda| \geq \sqrt{m}/2\]
with probability $1/2$ regardless of the value of $\lambda$.  In other words, both the probabilistic bound and bound in expectation become substantially dependent on $m$ without the normalization assumption on the columns.
\end{remark}

\begin{remark}[Column normalization improves the dependence on the subgaussian norm]
    The hard normalization of columns has an interesting beneficial effect on the dependence on the subgaussian norm. In the previous result (Theorem~\ref{thm: old result}) the dependence is $O(K \sqrt{\log K})$, and this dependence is optimal even for matrices with all independent entries \cite{jeong2022sub}. But as soon as we normalize the columns as in Corollary~\ref{cor: lambda cols}, the dependence becomes linear, i.e. $O(K)$. We see in Section \ref{sec: sparse matrices} that this small improvement leads to completely different asymptotic behaviour for the embedding guarantees of sparse matrices.  We also observe in Remark~\ref{rem: normalize rows} that this improvement only occurs for matrices with independent {\em columns} of a fixed norm, not for matrices with independent {\em rows} of a fixed norm.    
\end{remark}

\begin{remark}[Comparison to \cite{bartl2022random}]
\label{rem: comparison}
We now compare to the results in \cite{bartl2022random}.  Therein, the authors require the columns of $A$ to satisfy:  1) a ``thin-shell condition", i.e., to have concentrated norms, and 2) a moment condition on the marginals.  This allows a broad class of distributions, including subgaussian.  We specialize their results to the setting of Corollary~\ref{cor: lambda cols} with $\lambda = \sqrt{m}$, and also assume that a) the columns of $A$ are isotropic, b) $T \subset S^{n-1}$, and c) $\gamma(T) \geq \sqrt{\log n}$ to allow a simpler comparison.  In this case, the main result in \cite{bartl2022random} (Theorem 1.5), yields the following.  For $u\geq 1$, with probability at least $1 - 2\exp(-c \gamma(T)^2) - n^{-u}$, 
\[
\sup_{x \in T} \abs[\Big]{\norm*{Ax}^2_2 - m \norm*{x}^2_2} \le C(K, u) \cdot \sqrt{m} \left[ \gamma(T)(1 + \gamma(T)/\sqrt{m})\right] \log(e n/\gamma^2(T)).\] 
Note that this is a uniform bound on the squared difference $\norm*{Ax}^2_2 - m \norm*{x}^2_2 = (\norm*{Ax}^2 - \sqrt{m} \norm*{x}_2)(\norm*{Ax}_2 + \sqrt{m} \norm*{x}_2)$ thus explaining the extra factor of $\sqrt{m}$.  Further, in typical cases of interest $\gamma(T) < \sqrt{m}$.  Thus, it is natural to compare $C(K,\mu) \gamma(T) \log(en/\gamma^2(T))$ to the error bound given in Corollary \ref{cor: lambda cols}, which takes the form $C K \gamma(T)$.  In other words, by concentrating on the subgaussian case, we are able to improve on the results in \cite{bartl2022random} by giving the
 explicit and optimal {\em linear} dependence on the subgaussian norm $K$, and also removing the $\log(en/\gamma(T)^2)$ factor --- thereby showing that there is no dependence on the ambient dimension.
\end{remark}

We now specify some notations, discuss applications to sparse matrices in Section \ref{sec: sparse matrices}, show that column normalization can improve embedding guarantees for i.i.d.~matrices in Section \ref{sec: column normalization}, prove our main result in Section \ref{sec: proofs}, then our auxiliary results in Section \ref{sec: auxiliary proofs}, and then conclude with an open question in Section \ref{sec: conclusion}.

\subsection{Notation}  We use $\norm*{\cdot}_2$ for the Euclidean norm of vectors, and $S^{n-1}$ for the set of vectors in $\R^n$ with Euclidean norm $1$.  We say that $f \lesssim g$ if there exists an absolute constant $C$ such that $f \leq C g$ everywhere. The symbols $C, c$ generally refer to absolute constants which may vary from instance to instance. We say $f \asymp g$ if $c f \leq g \leq C f$, i.e., $f$ and $g$ are equivalent up to constants. 

\section{An example: sparse matrices} \label{sec: sparse matrices}
Sparse random matrices are a central object of study in the context of sparse Johnson Lindenstrauss embeddings \cite{achlioptas2003database} and sparse subspace embeddings \cite{clarkson2013low}. 
These give a method to facilitate computation by embedding data into a lower dimension near isometrically via a sparse (and thus low complexity) transform.  The central question is: \textit{How should we choose the distribution of $A$ to (a) minimize the number of nonzero entries and (b) ensure it behaves like a near-isometry?}


An interesting phenomenon has been observed (and understood) in this field: Choosing $A$ with i.i.d.~entries limits its ability to near-iso\-metric\-ally embed sparse vectors \cite{matouvsek2008variants, ailon2009fast}. However, choosing the columns from a distribution with exactly $s$ non-zero entries per column can give a significant improvement.  The literature in this area generally considers embeddings of singletons, finite sets, or subspaces, with the notable exception in \cite{bourgain2015toward}, discussed further below.  We extend the understanding of this phenomenon to arbitrary bounded sets in the following.
 We begin with an illustrative example to give an intuition for this point.  

\subsection{Approximate vs. exact sparsity}

Consider two different models to generate a sparse $m \times n$ matrix with (about) $s$ nonzero entries per column. In the first, ``approximate'' model, we sample each entry independently from the $\Ber(s/m)$ distribution. In the second, ``exact'' model, we put $s$ ones uniformly at random in each column of the matrix. Although individual entries in both models have the same $\Ber(s/m)$ distribution, it turns out that these two models have diverging behaviour as $m$ increases. In the first model, the number of nonzero entries in a column converges to a $\mbox{Poisson}(s)$ distribution. Thus, the column norms will concentrate around $\sqrt{s}$, but their deviations will not decrease to zero as $m$ increases. Equivalently, the matrix will randomly distort the norm of a standard basis vector, and {\em that distortion will never decrease to zero}. In contrast, in the second model, the number of nonzero entries per column is exactly $s$ and so the column norms are exactly $\sqrt{s}$. Further, as $m$ increases, the columns become more and more likely to have disjoint support and thus become orthogonal. Thus, the resulting matrix (multiplied by $1/\sqrt{s}$ for normalization) asymptotically becomes {\em an exact isometry} with probability approaching $1$ as $m$ increases. 

The following two results make these observations quantitative and general. 

\begin{corollary}[Approximately $s$-sparse columns] \label{cor: approx s sparse}
    Let $A$ be an $m \times n$ matrix whose entries are independent random variables which take the value $1$ or $-1$ with probability $\frac{s}{2m}$ each, and $0$ with probability $1-\frac{s}{m}$. Then, for any subset $T \subset \R^n$, we have
    $$
    \E \sup_{x \in T} \abs[\Big]{\norm*{Ax}_2 - \sqrt{s} \norm*{x}_2} \le C \gamma(T).
    $$
    Moreover, this bound is optimal in general. For any integers $1 \le s \le m/2$ and $n \ge 1$ there exists a non-empty subset $T \subset \R^n$ such that
    $$
    \E \sup_{x \in T} \abs[\Big]{\norm*{Ax}_2 - \sqrt{s} \norm*{x}_2} \ge c \gamma(T).
    $$
\end{corollary}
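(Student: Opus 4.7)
I would establish the upper and lower bounds separately.

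\emph{Upper bound via independent rows.} The matrix $A$ has i.i.d.~entries, so in particular its rows are i.i.d.~mean-zero random vectors in $\R^n$ with covariance $(s/m)\,I_n$. Hence the rescaled matrix $B \coloneqq \sqrt{m/s}\,A$ has independent, isotropic, subgaussian rows, placing it in the setting of Theorem~\ref{thm: old result}. A single entry $\xi$ of $A$, which takes values $\pm 1$ with probability $s/(2m)$ each and $0$ otherwise, satisfies $\norm{\xi}_{\psi_2} \asymp 1/\sqrt{\log(1+m/s)}$; summing over the independent entries of a row of $B$ yields
\[
    K \coloneqq \norm{B_{\mathrm{row}}}_{\psi_2} \;\lesssim\; \sqrt{\frac{m}{s\,\log(1+m/s)}}.
\]
Applying Theorem~\ref{thm: old result} to $B$ and rescaling by $\sqrt{s/m}$ gives
\[
    \E\sup_{x\in T}\abs[\big]{\norm{Ax}_2 - \sqrt{s}\norm{x}_2} \;\le\; C\,K\sqrt{\log K}\cdot \sqrt{s/m}\;\gamma(T).
\]
The target bound $C\gamma(T)$ then follows once one verifies that $K\sqrt{\log K}\cdot\sqrt{s/m}\lesssim 1$ uniformly in $s\in[1,m]$. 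This is a short check: $K\sqrt{s/m}$ equals $1/\sqrt{\log(1+m/s)}$ exactly, while $\log K \lesssim \log(1+m/s)$ in the sparse regime $s \le m/2$, so the logarithmic factors cancel; in the dense regime $s > m/2$ we have $K = O(1)$ and the elementary bound suffices.

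\emph{Lower bound.} For optimality I would take the singleton $T = \{e_1\}\subset\R^n$. Then $\gamma(T) = \E|g_1| = \sqrt{2/\pi}$ is an absolute constant, while $\norm{Ae_1}_2 = \sqrt{s_1}$, where $s_1 \sim \mathrm{Bin}(m, s/m)$ has mean $s$ and variance $\ge s/2$ under the hypothesis $s \le m/2$. Standard anti-concentration for the binomial lower tail yields $\Pr{s_1 \le s - c\sqrt{s}} \ge c' > 0$ for absolute constants $c,c'$. On this event,
\[
    \abs[\big]{\sqrt{s_1}-\sqrt{s}} \;=\; \frac{|s_1-s|}{\sqrt{s_1}+\sqrt{s}} \;\ge\; \frac{c\sqrt{s}}{2\sqrt{s}} \;=\; c/2,
\]
so $\E\abs[\big]{\norm{Ae_1}_2-\sqrt{s}} \ge c''$, matching $\gamma(T)$ up to an absolute constant.

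\emph{Main obstacle.} The only genuinely delicate step is the uniform verification that $K\sqrt{\log K}\cdot\sqrt{s/m}$ remains bounded by an absolute constant across all regimes of $(s,m)$ — in particular, one must confirm that the logarithmic factors cancel cleanly rather than leaving behind an unwanted $\sqrt{\log(m/s)}$ factor, and one must handle the transition to the dense regime where $K$ is only of order one and $\sqrt{\log K}$ is small. Once this routine check is carried out, the upper bound is an immediate consequence of the row-based theorem; notably, the new column-based Theorem~\ref{thm: main} is not needed for this result, which is consistent with the paper's message that the column-normalization improvement manifests in the exact sparsity model of the next corollary, not in the approximate one.
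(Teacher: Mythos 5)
Your upper-bound argument is essentially the paper's: rescale $A$ to $B=\sqrt{m/s}\,A$, compute $\norm{B_{\mathrm{row}}}_{\psi_2}\lesssim 1/\sqrt{p\log(2/p)}$ with $p=s/m$, apply Theorem~\ref{thm: old result}, and check that $K\sqrt{\log K}\cdot\sqrt{p}\lesssim 1$ because $\log K\lesssim\log(2/p)$. That check is exactly the "short" step the paper compresses into a single $\lesssim$, and you have correctly identified it as the only thing that needs verifying; your handling of the dense regime is also fine. One small slip in the exposition: the extra $\sqrt{m/s}$ in $K$ comes from the rescaling $B=\sqrt{m/s}\,A$, not from "summing over the entries of a row" — by \cite[Proposition~2.6.1]{hdp-book} the subgaussian norm of a random vector with i.i.d.\ entries is comparable to that of a single entry, so the row length $n$ never enters.

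For the lower bound you choose the same test set $T=\{e_1\}$, reduce to $\E\abs{\sqrt{s_1}-\sqrt{s}}$ with $s_1\sim\Binom(m,s/m)$, and use the identity $\abs{\sqrt{s_1}-\sqrt{s}}=\abs{s_1-s}/(\sqrt{s_1}+\sqrt{s})$, as in the paper. The one place your argument is not self-contained is the step
\[
\Pr{s_1 \leq s - c\sqrt{s}} \;\geq\; c' \;>\;0 ,
\]
which you invoke as "standard anti-concentration for the binomial lower tail." This fact is true for all $1\le s\le m/2$, but it is precisely the content the paper has to supply; it does so via the Paley--Zygmund inequality applied to $(Z-s)^2$ (using $\E(Z-s)^2\ge s/2$ and $\E(Z-s)^4\lesssim s^2$), combined with a Markov bound to keep $Z\lesssim s$. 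Your one-sided version is actually a mild streamlining — once you have $s_1\le s-c\sqrt{s}$ you get $\sqrt{s_1}+\sqrt{s}\le 2\sqrt{s}$ for free and can drop the Markov step — but it would need a proof, and the CLT heuristic you gesture at does not by itself handle the small-$s$ regime (e.g.\ $s=1$, where one must observe that $\Pr{s_1=0}=(1-1/m)^m\ge 1/4$). So: same approach as the paper in substance, with a modest gap at the anti-concentration step that the paper closes via Paley--Zygmund.
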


Note that the right-hand side does not decrease when $m$ grows. 

\begin{proof}
    Let $A_i$ denote the columns of $A$. It is not hard to check (see Lemma~\ref{lem: sparse subgauss norm}) that 
    $$
    \norm*{A_i}_\psitwo \lesssim \frac{1}{\sqrt{\log(2/p)}}
    \quad \text{where } p = \frac{s}{m}.
    $$
    If we normalize $A_i$ by setting $Z_i = A_i/\sqrt{p}$, then all coordinates of $Z_i$ are independent random variables with mean zero and variance $1$. Hence $Z_i$ are independent and isotropic random vectors with subgaussian norms bounded by $K = 1/\sqrt{p\log(2/p)}$. Thus, applying Theorem~\ref{thm: old result} for the matrix $A/\sqrt{p}$, we obtain
    $$
    \E \sup_{x \in T} \abs[\Big]{\frac{1}{\sqrt{p}} \norm*{Ax}_2 - \sqrt{m} \norm*{x}_2} 
    \lesssim K\sqrt{\log K}\, \gamma(T)
    \lesssim \frac{\gamma(T)}{\sqrt{p}}.
    $$
    Multiply by $\sqrt{p}$ on both sides to obtain the first part of the theorem.

    To prove the second part, choose $T=\{e_1\}$, the first vector of the standard basis in $\R^n$. Then 
    $$
    \E \sup_{x \in T} \abs[\Big]{\norm*{Ax}_2 - \sqrt{s} \norm*{x}_2} 
    = \E \abs*{\sqrt{Z}-\sqrt{s}}
    \quad \text{(where $Z \sim \Binom(m,p)$)}.
    $$
    Since $\gamma(T) \asymp 1$, it suffices to prove that $\E \abs*{\sqrt{Z}-\sqrt{s}} \gtrsim 1$. A quick computation\footnote{To prove the second identity, express $Z-s=\sum_{i=1}^m X_i$ where $X_i$ are independent centered $\Ber(p)$ random variables. Expand the fourth power of the sum and use that $\E X_i=0$ to get $\E(Z-s)^4 \lesssim m \E X_1^4 + m^2(\E X_1^2)^2 \lesssim mp+m^2p^2 \lesssim s^2$.} gives
    $$
    \E(Z-s)^2 = mp(1-p) \ge \frac{s}{2}
    \quad \text{and} \quad
    \E(Z-s)^4 \lesssim s^2  .
    $$
   Then using Paley-Zygmund inequality for $(Z-s)^2$, we get
    \begin{equation}    \label{eq: Z-s large}
        \Prob*{\abs{Z-s} \ge \frac{\sqrt{s}}{2}}
        =\Prob*{(Z-s)^2 \ge \frac{1}{2}\E(Z-s)^2}
        > c. 
    \end{equation}
    Furthermore, since $\E Z = mp=s$, Markov inequality gives
    $$
    \Prob*{Z \le 2s/c} 
    \ge 1-c/2.
    $$
    Combining this with \eqref{eq: Z-s large} by the union bound, we conclude that with probability at least $c/2$ we have both 
    $$
    \abs{Z-s} \gtrsim \sqrt{s}
    \quad \text{and} \quad 
    Z \lesssim s.
    $$
    If this event occurs, we have 
    $$
    \abs*{\sqrt{Z}-\sqrt{s}}
    =\frac{\abs{Z-s}}{\sqrt{Z}+\sqrt{s}} 
    \gtrsim 1. 
    $$
    Thus $\E \abs*{\sqrt{Z}-\sqrt{s}} \gtrsim (c/2)\cdot 1 \gtrsim 1$, as required to complete the proof.

    The reader may be wondering whether a lower bound can also be shown with sets $T$ for which $\gamma(T)$ is not constant.  Indeed, observe that $\sqrt{Z}-\sqrt{s}$ converges to a $N(0,1/4)$ random variable as $m \rightarrow \infty$ and $s\rightarrow\infty$ provided $s/m \rightarrow 0$ (this can be shown via the Berry-Esseen central limit theorem together with a Taylor approximation).  This can be used as a first step to verify our matching upper and lower bounds for the set $T= \{e_1, e_2, ..., e_n\}$ when $m$ and $s$ are appropriately large.
\end{proof}

\begin{corollary}[Exactly $s$-sparse columns] \label{cor: exact s sparse}
    Let $A$ be an $m \times n$ matrix whose columns are independent random vectors drawn uniformly from the set of vectors in $\R^m$ that have all $0, -1, 1$ entries and exactly $s$ nonzero entries. Then, for any subset $T \subset \R^n$, we have
    $$
    \E \sup_{x \in T} \abs[\Big]{\norm*{Ax}_2 - \sqrt{s} \norm*{x}_2} 
    \le \frac{C \gamma(T)}{\sqrt{\log(2m/s)}}.
    $$
    Moreover, this bound is optimal in general. For any integers $1 \le s \le m$ there exists $n=n(m,s)$ and a non-empty subset $T \subset \R^n$ such that
    $$
    \E \sup_{x \in T} \abs[\Big]{\norm*{Ax}_2 - \sqrt{s} \norm*{x}_2} 
    \ge \frac{c \gamma(T)}{\sqrt{\log(2m/s)}}.
    $$
\end{corollary}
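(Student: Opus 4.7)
The plan is to derive the upper bound from Corollary~\ref{cor: lambda cols} and to construct a suitable witness set for the lower bound.

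For the upper bound, observe first that each column $A_i$ has exactly $s$ entries of $\pm 1$ and vanishes elsewhere, so $\norm{A_i}_2 = \sqrt{s}$ almost surely. Corollary~\ref{cor: lambda cols} with $\lambda = \sqrt{s}$ then reduces the problem to controlling the subgaussian norm of a single column by
\[
    \norm{A_i}_\psitwo \lesssim \frac{1}{\sqrt{\log(2m/s)}},
\]
which I would prove as an auxiliary lemma analogous to Lemma~\ref{lem: sparse subgauss norm}. For a fixed unit vector $x$, write $\ip{A_i}{x} = \sum_{j \in S} \e_j x_j$ with $S$ a uniformly random $s$-subset of $[m]$ and $\e_j$ independent signs. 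Conditional on $S$ this is a Rademacher sum, so $\E[\exp(\l \ip{A_i}{x}) \mid S] = \prod_{j \in S} \cosh(\l x_j)$. Averaging over $S$ via the negative association of the indicators $\ind_{j \in S}$, together with the elementary inequality $\cosh(\l x_j) - 1 \le x_j^2 (\cosh\l - 1)$ valid for $|x_j| \le 1$, produces the unconditional MGF bound
\[
    \E \exp(\l \ip{A_i}{x}) \le \exp\!\big((s/m)(\cosh\l - 1)\big).
\]
Chernoff's inequality at $\l \asymp \log(2m/s)$ then gives $\Pr{|\ip{A_i}{x}| \gtrsim 1} \lesssim s/m$, which combined with the deterministic bound $|\ip{A_i}{x}| \le \sqrt{s}$ yields the desired $\psi_2$ estimate uniformly in $x$. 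Substituting into Corollary~\ref{cor: lambda cols} gives $\E \sup_{x \in T} |\norm{Ax}_2 - \sqrt{s} \norm{x}_2| \le C \gamma(T) / \sqrt{\log(2m/s)}$.

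For the lower bound, the singleton $T = \{e_1\}$ used in Corollary~\ref{cor: approx s sparse} no longer helps: $\norm{A e_1}_2 = \sqrt{s}$ is deterministic. The entire deviation $\norm{Ax}_2^2 - s \norm{x}_2^2 = 2 \sum_{i<j} x_i x_j \ip{A_i}{A_j}$ is now fed by the cross terms $\ip{A_i}{A_j}$, which vanish when the supports $S_i, S_j$ are disjoint and take value $\pm 1$ with probability $\asymp s^2/(2m)$ when $|S_i \cap S_j| = 1$. I would choose $T$ to be a family of normalized group-sparse vectors of the form $(e_{i_1} + \dots + e_{i_k})/\sqrt{k}$ indexed by a carefully sized collection of $k$-subsets of $[n]$, with $n$, $k$, and the cardinality of the collection tuned so that $\gamma(T) \asymp \sqrt{\log(2m/s)}$. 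A direct second-moment and union-bound computation, exploiting the independence of the columns together with the small-ball probabilities of the cross terms, should then deliver the matching lower bound.

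The main obstacle lies in calibrating the lower bound. The na\"ive two-sparse choice $(e_i + e_j)/\sqrt{2}$ gives $|\sqrt{s + \ip{A_i}{A_j}} - \sqrt{s}| \asymp 1/\sqrt{s}$ on the event $|\ip{A_i}{A_j}| = 1$, a factor of $\sqrt{s}$ below the required scale. To absorb this loss, $T$ should be chosen so that the aggregated cross terms sit at order $\sqrt{s}$ rather than order $1$ while $\gamma(T)$ remains at $\sqrt{\log(2m/s)}$; this likely forces the sparsity level $k$ in the test vectors to be tied to $m/s$ and the size of the index collection to grow polynomially in $m$.
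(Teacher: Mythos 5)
Your upper bound follows the paper's route: column normalization is automatic ($\norm{A_i}_2 = \sqrt{s}$ a.s.), Corollary~\ref{cor: lambda cols} reduces everything to bounding $\norm{A_i}_\psitwo$, and your MGF/negative-association argument for the subgaussian norm of a single column is a legitimate alternative to the paper's Lemma~\ref{lem: sparse subgauss norm}, which instead compares the moments of $\ip{X}{u}$ to those of the independent-entries model $Y$ by conditioning on $\{\norm{Y}_0 = s\}$. Both yield $\norm{A_i}_\psitwo \lesssim 1/\sqrt{\log(2m/s)}$, so this part is sound.

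The lower bound is where there is a genuine gap. You correctly diagnose that $T = \{e_1\}$ is useless (the column norm is deterministic) and that a two-sparse test vector $(e_i + e_j)/\sqrt{2}$ undershoots by a factor of $\sqrt{s}$, but the proposed fix --- $k$-sparse group vectors $(e_{i_1}+\cdots+e_{i_k})/\sqrt{k}$ with $\gamma(T) \asymp \sqrt{\log(2m/s)}$, aiming for a unit-scale deviation by aggregating the cross terms --- does not calibrate. For $\norm{x}_2 = 1$ you need $\norm{Ax}_2^2 - s$ to sit at scale $\sqrt{s}$, but a variance computation gives $\operatorname{Var}\bigl(\frac1k\sum_{a\ne b \in I}\ip{A_a}{A_b}\bigr) \asymp s^2/m$ regardless of $k$, and taking a sup over $N$ test vectors buys at most another $\sqrt{\log N}$; keeping $\gamma(T)\asymp\sqrt{\log(2m/s)}$ then pins the typical deviation near $(s/\sqrt{m})\sqrt{\log(2m/s)}$, which is far below $\sqrt{s}$ whenever $s \ll m$. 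So the ``small $T$, unit deviation'' strategy cannot reach the stated bound across the full range $1\le s\le m$.

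The paper instead goes the other direction: it makes both the deviation and $\gamma(T)$ large by the same $\sqrt{s}$ factor. Take $T = \{e_1 - e_2,\ldots,e_1 - e_n\}$ with $n = \lceil (2em/s)^{3s}\rceil$. Since $\Pr{A_1 = A_2} = 1/(2^s\binom{m}{s}) \ge (2em/s)^{-s} \ge 1/n$, independence of the columns gives $\Pr{\exists x\in T:\ Ax = 0} \ge 1 - (1-1/n)^{n-1} \ge c$, and on this event the deviation is $\sqrt{s}\norm{x}_2 = \sqrt{2s}$. Meanwhile $\gamma(T) = \E\max_{i\ge 2}\abs{g_1 - g_i} \lesssim \sqrt{\log n} \lesssim \sqrt{s\log(2m/s)}$, so the ratio is exactly $1/\sqrt{\log(2m/s)}$. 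The collision idea --- exact cancellation $Ax = 0$ rather than an aggregation of near-orthogonal cross terms --- is the missing ingredient in your plan.
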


Note that the right-hand side decreases when $m$ grows. 

\begin{proof}
    It is not hard to check (see Lemma~\ref{lem: sparse subgauss norm}) that the columns $A_i$ of $A$ satisfy
    $$
    \norm*{A_i}_2 = \sqrt{s} \text{ a.s.}
    \quad \text{and} \quad
    \norm*{A_i}_\psitwo \lesssim \frac{1}{\sqrt{\log(2m/s)}}.
    $$
    Apply Corollary~\ref{cor: lambda cols} to obtain the first part of the theorem.

    To prove the second part, choose 
    $$
    T = \left\{ e_1-e_2,\, e_1-e_3,\, \ldots, e_1-e_n \right\}
    $$
    where $e_1,\ldots,e_n$ is the standard basis of $\R^n$. Then 
    \begin{align*}
    \Prob*{Ax=0 \text{ for some } x \in T}
        &= \Prob*{A_1=A_i \text{ for some } i=2,\ldots,n} \\
        &= 1-\left(1-\Prob*{A_1=A_2}\right)^{n-1}.
    \end{align*}
    \begin{align*}
        \Prob*{A_1=A_2} 
        &= \frac{1}{2^s\binom{m}{s}} \\
        &\ge \Big(\frac{2em}{s}\Big)^{-s}
            \quad \text{(by a standard bound \cite[Exercise~0.0.5]{hdp-book})} \\
        &\ge \frac{1}{n}
            \quad \text{(if we choose) } n = \left\lceil \Big(\frac{2em}{s}\Big)^{3s} \right\rceil. 
    \end{align*}
    Then 
    $$
    \Prob*{Ax=0 \text{ for some } x \in T}
    \ge 1-\left(1-\frac{1}{n}\right)^{n-1}
    \ge c.
    $$
    When $Ax=0$ for some $x \in T$, we have $\abs[\Big]{\norm*{Ax}_2 - \sqrt{s} \norm*{x}_2} = \sqrt{2s}$. It follows that 
    \begin{equation}    \label{eq: deviation large}
        \E \sup_{x \in T} \abs[\Big]{\norm*{Ax}_2 - \sqrt{s} \norm*{x}_2} 
        \ge c \sqrt{2s}.
    \end{equation}
    On the other hand, denoting by $g_i$ independent $N(0,1)$ random variables, we have
    \begin{equation}    \label{eq: gamma small}
        \gamma(T) 
        = \E \max_{i=2,\ldots,n} \abs{g_1-g_i}
        \lesssim \sqrt{\log n}
        \lesssim \sqrt{s\log(2m/s)}.
    \end{equation}
    where the first inequality follows from the standard bound on the maximum of subgaussian random variables \cite[Exercise~2.5.10]{hdp-book}, and the second inequality follows by our choice of $n$. Comparing \eqref{eq: deviation large} and \eqref{eq: gamma small}, we complete the proof.
\end{proof}

 \begin{remark}[Diverging asymptotic behaviour]
 As shown in Corollaries \ref{cor: approx s sparse} and \ref{cor: exact s sparse}, in general as $m\rightarrow \infty$ the isometry constant for sparse matrices with i.i.d.~entries remains lower bounded, whereas the isometry constant for sparse matrices with exactly $s$ non-zeros per column decreases to 0.
 \end{remark}

\begin{remark}[Alternative treatments of isometry constants of sparse matrices]
We point out three different treatments of the isometry constants of sparse matrices depending on the structure of $T$:
\begin{enumerate}[\quad 1.]
    \item There is a rich understanding of the isometric properties of sparse matrices in the literature on sparse Johnson Lindenstrauss embeddings (see e.g., \cite{kane2014sparser}) when $T$ is a singleton (or a finite set). 
    \item When $T$ is a $d$-dimensional subspace, again there is a thorough understanding in the literature on sparse (oblivious) subspace embeddings.  Here it is shown that one only needs $m=O(d)$ to have an isometry constant lower than 1, even if the sparsity is polylogarithmic (see e.g., \cite{chenakkod2024optimal}) -- a result which does not follow from Corollary~\ref{cor: exact s sparse}. 
    \item We are aware of one paper, \cite{bourgain2015toward}, which addresses the isometric properties of sparse matrices for arbitrary subsets of the sphere.  
    The main theorem in \cite{bourgain2015toward} controls the isometry constant by a new quantity, which is akin to a sparsified gaussian complexity.  We don't repeat that theorem here because the conditions are relatively complex, but note that it recovers the results on sparse subspace embeddings (up to log factors) as a special case.  Thus, the machinery in \cite{bourgain2015toward} allows tighter control of the isometry constant than Corollary \ref{cor: exact s sparse} for some sets and parameter regimes.  However, the results in \cite{bourgain2015toward} do not imply that the restricted isometry constant converges to 0 as $m \rightarrow \infty$ and thus do not imply the diverging asymptotic behaviour of ``approximately sparse" and ``exactly sparse" matrices shown in our work and discussed in the remark above.  Further, we note that our work does not require $T$ to be a subset of the sphere, and is oblivious to every property of $T$ aside from its gaussian complexity.
    \end{enumerate}
\end{remark}

 \begin{remark}[Exactly $s$-sparse rows do not give the same benefit] \label{rem: normalize rows}
 The reader may be wondering whether the improvements gained by choosing the columns from a distribution with exactly $s$ non-zeros could alternatively be gained by choosing the rows from such a distribution.   In fact, this is not the case.  For instance, suppose $A$ is a square $m \times m$ matrix whose rows are independently drawn from the set of vectors that have all $0, -1, 1$ entries and exactly $s$ nonzero entries.  Then, the first column (or any other) of $A$ has entries which are independent random variables which take the value $1$ or $-1$ with probability $\frac{s}{2m}$ each, and $0$ with probability $1-\frac{s}{m}$.  The lower bound in Corollary \ref{cor: approx s sparse} then applies, since it is realized by the set $T=\{e_1\}$ which isolates the first column.
 \end{remark}

\section{Normalizing the columns of an i.i.d.~matrix} \label{sec: column normalization}

We now consider random matrices with independent, symmetric, subgaussian entries.  We show that normalizing the columns improves the dependence of the isometry constant on the subgaussian norm.  We assume the entries have variance 1 which implies that the rows are isotropic, thus fitting the models in \cite{LMPV, jeong2022sub}.  This causes the columns to concentrate around $\sqrt{m}$ and so we normalize them onto $\sqrt{m}S^{n-1}$ to more easily compare to the results in \cite{jeong2022sub}.  We need to condition on the event that the columns have lower-bounded Euclidean norm as otherwise the normalization step could blow up the columns by an arbitrary amount.  This is encoded in the event $F$ below.

\begin{corollary}  \label{cor: normalize cols}  Let $A$ be an $m\times n$ matrix whose entries are independent, symmetric, with unit variance, and subgaussian norm bounded by $K$.   Assume $m \geq C K^2 \log(K) \cdot \log(n)$.\footnote{The assumption that $m \geq C K^2 \log(K) \cdot \log(n)$ could be replaced with $m \geq C K^2 \log(K)$ at the expense of
 losing the lower bound on $\Probevent{F}$.}  Let $A_i$ be the $i$th column of $A$.  Let $F$ be the event that all columns of $A$ have Euclidean norm lower bounded by $\sqrt{m}/2$, that is,
\[F:=\left\{\min_{i = 1, 2, \hdots, n} \norm{A_i}_2 \geq \frac{\sqrt{m}}{2}\right\}.\]
Then $\Pr{F} \geq 1 - 2 \exp(-C m/(K^2 \log K))$.  Further, conditional on $F$, consider the column-normalized matrix $\tilde{A}$ with columns
\[\tilde{A}_i := \frac{\sqrt{m}}{\norm{A_i}_2} \cdot A_i, \qquad i = 1, 2, \hdots, n.\]
The matrix $\tilde{A}$ satisfies 
\[
\E\left[\sup_{x\in T} \Big| \|\tilde{A}x\|_2 - \sqrt{m} \| x\|_2 \Big| \Bigg| F\right] \le C K \gamma(T).
\]
Moreover, conditional on $F$, with probability at least $1 - 3 \exp(-u^2)$.   
\begin{equation} \label{eq: normalized cols}
\sup_{x\in T} \Big| \|\tilde{A}x\|_2 - \sqrt{m} \| x\|_2 \Big| \le C K \left[w(T) + u\cdot \rad(T)\right]. 
\end{equation}
\end{corollary}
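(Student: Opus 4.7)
The plan is to reduce the corollary to Corollary~\ref{cor: lambda cols} applied in the conditional probability space given $F$. The three ingredients are (i) a single-column small-ball estimate that, together with a union bound, yields $\Pr{F} \ge 1 - 2\exp(-Cm/(K^2 \log K))$; (ii) a verification that conditional on $F$ the matrix $\tilde{A}$ has independent, mean-zero columns of Euclidean norm exactly $\sqrt{m}$ and subgaussian norm $\lesssim K$; and (iii) a direct application of Corollary~\ref{cor: lambda cols} with $\lambda = \sqrt{m}$ in the conditional law. The main obstacle is (i): the direct Bernstein bound applied to $\|A_i\|_2^2 - m = \sum_j (A_{ji}^2 - 1)$ only produces $\exp(-cm/K^4)$ per column, which is too weak to be combined with a union bound over $n$ columns under only the hypothesis $m \ge CK^2 \log K \log n$.

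To get the sharper single-column estimate I would run a truncated Chernoff argument on the Laplace transform $\E e^{-\lambda X^2}$ for a generic entry $X$. Since $\Pr{X^2 > T} \le 2 e^{-cT/K^2}$ and $\E X^2 = 1$, one can pick a truncation level $T \asymp K^2 \log K$ with $\E[X^2 \ind_{X^2 \le T}] \ge 1/2$. Using the elementary bound $e^{-u} \le 1 - u + u^2/2$ for $u \ge 0$ then gives
$$
\E e^{-\lambda X^2} \le 1 - \tfrac{\lambda}{2} + \tfrac{\lambda^2 T}{2} \quad \text{for } \lambda \in (0, 1/T].
$$
By independence of the entries of $A_i$, $\E e^{-\lambda \|A_i\|_2^2} \le (1 - \lambda/2 + \lambda^2 T/2)^m$, and substituting $\lambda \asymp 1/T$ into the Chernoff bound $\Pr{\|A_i\|_2^2 \le m/4} \le e^{\lambda m/4}\, \E e^{-\lambda \|A_i\|_2^2}$ yields $\Pr{F_i^c} \le \exp(-cm/(K^2 \log K))$ where $F_i := \{\|A_i\|_2 \ge \sqrt{m}/2\}$. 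A union bound over the $n$ columns, combined with $m \ge CK^2 \log K \cdot \log n$, then gives the required bound on $\Pr{F^c}$.

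For step (ii), the event $F = \bigcap_{i=1}^n F_i$ is a product of independent local events, so conditional on $F$ the columns of $\tilde{A}$ remain independent and each $A_i$ is distributed as $A_i \mid F_i$. Symmetry of the entries makes each $F_i$ invariant under $A_i \mapsto -A_i$, so the conditional laws of $A_i$ and of $\tilde{A}_i = \sqrt{m}\, A_i/\|A_i\|_2$ remain symmetric; in particular $\E[\tilde{A}_i \mid F] = 0$, while $\|\tilde{A}_i\|_2 = \sqrt{m}$ by construction. Independence of the entries of $A_i$ gives $\|\langle A_i, x\rangle\|_{\psi_2} \lesssim K$ uniformly in $x \in S^{m-1}$; because $|\langle \tilde{A}_i, x\rangle| \le 2|\langle A_i, x\rangle|$ on $F_i$ and $\Pr{F_i} \ge 1/2$, conditioning on $F_i$ inflates the $\psi_2$-norm by only a constant factor, yielding $\|\tilde{A}_i \mid F\|_{\psi_2} \lesssim K$. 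Invoking Corollary~\ref{cor: lambda cols} with $\lambda = \sqrt{m}$ in the conditional probability space then produces both conclusions of Corollary~\ref{cor: normalize cols}.
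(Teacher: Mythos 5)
Your proof is correct and follows the same overall plan as the paper: reduce to Corollary~\ref{cor: lambda cols} in the conditional probability space given $F$, after establishing (i) the single-column lower-tail concentration for $\|A_i\|_2$, (ii) that conditionally on $F$ the columns of $\tilde A$ are independent, mean-zero, of norm exactly $\sqrt m$, and of subgaussian norm $\lesssim K$, and (iii) invoking the rescaled main theorem with $\lambda = \sqrt m$. Steps (ii) and (iii) match the paper essentially verbatim, including the use of entry-symmetry to get $\E[\tilde A_i \mid F] = 0$, the pointwise bound $\sqrt m/\|A_i\|_2 \le 2$ on $F$, and the fact that conditioning on an event of probability $\ge 1/2$ inflates the $\psi_2$-norm only by a constant.

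Where you genuinely diverge is step (i). The paper simply cites Theorem~4.1 of \cite{jeong2022sub}, which asserts $\bigl\|\,\|A_i\|_2 - \sqrt m\,\bigr\|_{\psi_2} \lesssim K\sqrt{\log K}$ and hence the single-column tail $\Pr{\|A_i\|_2 \le \sqrt m/2} \le 2\exp(-Cm/(K^2\log K))$ in one line. You instead derive the needed lower-tail bound from scratch by a truncated Laplace-transform argument: truncate each entry at level $T \asymp K^2 \log K$ so that $\E[X^2\ind_{X^2\le T}] \ge 1/2$, use $e^{-u} \le 1 - u + u^2/2$ on the truncated part to obtain $\E e^{-\lambda X^2} \le 1 - \lambda/2 + \lambda^2 T/2$ for $\lambda \in (0,1/T]$, tensorize over the $m$ independent entries, and Chernoff with $\lambda \asymp 1/T$. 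This correctly produces $\Pr{\|A_i\|_2^2 \le m/4} \le \exp(-cm/(K^2\log K))$, matching the rate the paper gets from the cited theorem, whereas (as you rightly observe) a direct Bernstein bound on $\sum_j (A_{ji}^2 - 1)$ would only give $\exp(-cm/K^4)$. Your route buys self-containment and exposes exactly why the improved rate holds (the lower tail of a sum of nonnegative subgaussian squares is sharper than Bernstein), at the cost of a longer argument; the paper's route is shorter but relies on the external two-sided concentration result. A further small streamlining in your version: you condition on $F_i$ alone (noting $A_i \mid F = A_i \mid F_i$ by independence) rather than on $F$, so you need only $\Pr{F_i} \ge 1/2$; the paper proves $\Pr{F} \ge 1/2$ in the main text and relegates this simplification to a remark.
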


Note that one may generate the matrices $A, \tilde{A}$ conditionally on $F$ by re-sampling any columns of $A$ whose Euclidean norms are below $\sqrt{m}/2$.  Further, since $\Pr{F} \geq 1 - 2 \exp(-C m/(K^2 \log K))$, the matrix $\tilde{A}$ is well defined and Equation~\eqref{eq: normalized cols} holds unconditionally with probability at least $(1 - 3 \exp(-u^2)) \cdot \Pr{F} \geq 1 - 3 \exp(-u^2) - 2 \exp(-C m/(K^2 \log K))$.

\begin{remark}[Effect of normalizing columns]
Observe that the rows of $A$ are independent and isotropic.  Thus, we are in the setting of \cite[Corollary 1.2]{jeong2022sub} (see Theorem~\ref{thm: old result} in Section \ref{sec: intro}), in which the authors show the unnormalized version of $A$ satisfies Equation \eqref{eq: normalized cols} but with $K \sqrt{\log K}$ dependence on the subgaussian norm, and that dependence is tight \cite[Proposition 4.5]{jeong2022sub}.  Our corollary shows that normalizing columns removes the logarithmic factor, improving the dependence to linear. 
\end{remark}

As an application of this corollary, we show that normalizing the columns of an ``approximately sparse" matrix improves the embedding guarantees to those of an ``exactly sparse" matrix (see Section \ref{sec: sparse matrices}).  This follows from Corollary~\ref{cor: normalize cols} after rescaling the columns to have norm $\sqrt{s}$.

\begin{corollary} \label{cor: sparse normalized columns} 
    Let $m \geq  s \geq C \log(n)$.\footnote{The assumption that $s \geq C \log(n)$ could be replaced with $s \geq 1$ at the expense of losing the lower bound on $\Probevent{F}$.}  Let $A$ be an $m \times n$ matrix whose entries are independent random variables which take the value $1$ or $-1$ with probability $\frac{s}{2m}$ each, and $0$ with probability $1-\frac{s}{m}$.
Let $F$ be the event that all columns of $A$ have Euclidean norm lower bounded by $\sqrt{s}/2$, that is,
\[F:=\left\{\min_{i = 1, 2, \hdots, n} \norm{A_i}_2 \geq \frac{\sqrt{s}}{2}\right\}.\]
 Then $\Pr{F} \geq 1 - 2 \exp(-C s)$.  Further, conditional on $F$, consider the normalized matrix $\tilde{A}$ with columns
 \[\tilde{A}_i := \frac{\sqrt{s}}{\norm{A_i}_2} A_i, \qquad i = 1, 2, \hdots, n.\]
Conditional on $F$, $\tilde{A}$ satisfies 
 \begin{equation} \label{eq: sparse normalized cols}
 \E\left[\sup_{x\in T} \Big| \|\tilde{A}x\|_2 -  \sqrt{s} \| x\|_2 \Big| \Bigg| F\right] \le \frac{C}{\sqrt{\log(1+m/s)}}\gamma(T). 
 \end{equation}
 \end{corollary}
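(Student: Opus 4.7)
\medskip

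\textbf{Proof plan.}

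The idea is to reduce directly to Corollary~\ref{cor: normalize cols} by rescaling the entries so that they have unit variance, applying that corollary, and then translating the bound back.

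First I would introduce the rescaled matrix $B := \sqrt{m/s}\, A$. Its entries are then independent, symmetric, of unit variance, and take the values $\pm\sqrt{m/s}$ with probability $s/(2m)$ each and $0$ with probability $1-s/m$. A short computation with the defining inequality $\E\exp(B_{ij}^2/t^2)\le 2$ gives
\[
K := \|B_{ij}\|_{\psi_2} \asymp \sqrt{\frac{m/s}{\log(1+m/s)}},
\]
i.e.\ $K^2 \asymp (m/s)/\log(1+m/s)$. This is the one genuine computation in the argument; the rest is bookkeeping.

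Next I would verify the hypothesis $m \geq CK^2 \log K \cdot \log n$ of Corollary~\ref{cor: normalize cols}. Since $\log K \lesssim \log(1+m/s)$, the entry-subgaussian estimate gives $K^2 \log K \lesssim m/s$, so the hypothesis becomes $m \gtrsim (m/s)\log n$, i.e.\ $s \gtrsim \log n$, which is exactly what is assumed. I would also align the conditioning events: $B_i = \sqrt{m/s}\,A_i$, so $\|B_i\|_2 \geq \sqrt{m}/2$ iff $\|A_i\|_2 \geq \sqrt{s}/2$. Hence the event $F$ in the statement is identical to the event $F$ of Corollary~\ref{cor: normalize cols} applied to $B$, and the same corollary yields $\Pr{F} \geq 1 - 2\exp\!\left(-Cm/(K^2\log K)\right) \geq 1 - 2\exp(-Cs)$.

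Finally I would apply Corollary~\ref{cor: normalize cols} to $B$: conditional on $F$, the column-normalized matrix $\tilde B$ with columns $\tilde B_i = (\sqrt{m}/\|B_i\|_2)\,B_i$ satisfies $\E\!\left[\sup_{x \in T}\big|\|\tilde B x\|_2 - \sqrt{m}\|x\|_2\big|\,\big|\,F\right]\leq CK\gamma(T)$. But on $F$ one checks that $\tilde B = \sqrt{m/s}\,\tilde A$, and therefore $\|\tilde B x\|_2 - \sqrt{m}\|x\|_2 = \sqrt{m/s}\bigl(\|\tilde A x\|_2 - \sqrt{s}\|x\|_2\bigr)$. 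Dividing through by $\sqrt{m/s}$ and using $K/\sqrt{m/s} \asymp 1/\sqrt{\log(1+m/s)}$ delivers exactly the bound in \eqref{eq: sparse normalized cols}.

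\textbf{Main obstacle.} The only real step is the entry-wise subgaussian computation producing $K\asymp\sqrt{(m/s)/\log(1+m/s)}$ and the resulting clean identities $K^2\log K \lesssim m/s$ and $K/\sqrt{m/s}\asymp 1/\sqrt{\log(1+m/s)}$; once these are in hand, the hypothesis check, the tail bound on $F$, and the final rescaling all fall out mechanically from Corollary~\ref{cor: normalize cols}.
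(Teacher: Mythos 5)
Your proposal is correct and follows exactly the route the paper indicates (the paper's proof is the one-line remark ``This follows from Corollary~\ref{cor: normalize cols} after rescaling the columns to have norm $\sqrt{s}$''); you have simply carried out that rescaling in full detail. The entry-wise $\psi_2$ computation $K=\sqrt{(m/s)/\log(1+m/s)}$, the identification of the two events $F$ via $\|B_i\|_2=\sqrt{m/s}\,\|A_i\|_2$, the verification $K^2\log K\lesssim m/s$ (which turns the hypothesis of Corollary~\ref{cor: normalize cols} into $s\gtrsim\log n$ and the tail into $1-2e^{-Cs}$), and the final division by $\sqrt{m/s}$ with $K/\sqrt{m/s}=1/\sqrt{\log(1+m/s)}$ are all correct.
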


\begin{remark}
By comparing to Corollary~\ref{cor: approx s sparse}, we see that normalizing the columns has the same beneficial effect as choosing the columns to be exactly $s$-sparse:  Without normalizing, the isometry constant may remain lower bounded as $m \rightarrow \infty$, but after normalizing the isometry constant shrinks to 0 as $m \rightarrow \infty$.
\end{remark}

\section{Proof of Theorem~\ref{thm: main}} \label{sec: proofs}

As in our work \cite{LMPV}, whose exposition can be found in \cite[Section~9.1]{hdp-book}, Theorem~\ref{thm: main} follows automatically from Talagrand's majorizing measure theorem and the following result: 

\begin{theorem}[Subgaussian increments]		\label{thm: subgaussian increments}
	Let $A$ be a random matrix satisfying the assumptions of Theorem~\ref{thm: main}. Then the random process
  	$$
  	Z_x := \norm{Ax}_2 - \norm{x}_2
  	$$
  	has subgaussian increments, namely 
  	$$
  	\norm{Z_x - Z_y}_\psitwo 
  	\le C K \norm{x-y}_2 \quad \text{for all } x,y \in \R^n.
  	$$
\end{theorem}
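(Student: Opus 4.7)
The plan is to reduce the subgaussian-increment claim to a concentration estimate for the squared process. The column normalization $\|A_i\|_2 = 1$ a.s.\ causes the diagonal in $\|Ax\|_2^2 = \sum_{i,j} x_i x_j \langle A_i, A_j\rangle$ to cancel against $\|x\|_2^2$, leaving the pure off-diagonal chaos
\[
R(x) := \|Ax\|_2^2 - \|x\|_2^2 = \sum_{i \neq j} x_i x_j \langle A_i, A_j\rangle.
\]
Writing $s := x+y$ and $h := x-y$, the polarization $\|Ax\|_2^2 - \|Ay\|_2^2 = \langle As, Ah\rangle$, combined with the same diagonal cancellation, yields
\[
R(x) - R(y) \;=\; \sum_{i \neq j} s_i h_j \langle A_i, A_j\rangle \;=\; \sum_{i<j}(s_i h_j + s_j h_i)\,\langle A_i, A_j\rangle.
\]

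The technical core will be the subgaussian bound
\[
\|R(x) - R(y)\|_{\psi_2} \;\lesssim\; K\,\|s\|_2\,\|h\|_2. \qquad (\star)
\]
To prove $(\star)$, filter by $\mathcal{F}_k := \sigma(A_1,\ldots,A_k)$; then $R(x) - R(y) = \sum_k \Delta_k$ with martingale differences $\Delta_k := \langle A_k, w_k\rangle$ and $w_k := \sum_{i<k}(s_i h_k + s_k h_i)\, A_i$. Conditionally on $\mathcal{F}_{k-1}$, the hypothesis $\|A_k\|_{\psi_2} \le K$ gives
\[
\bigl\|\Delta_k \,\big|\, \mathcal{F}_{k-1}\bigr\|_{\psi_2} \;\le\; K\,\|w_k\|_2,
\]
with the critical feature that only \emph{one} power of $K$ appears. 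The predictable quadratic variation $K^2 \sum_k \|w_k\|_2^2$ has expectation at most $2K^2\,\|s\|_2^2\,\|h\|_2^2$ by $L^2$-orthogonality of the $A_i$'s. A high-probability bound of the same order on $\sum_k \|w_k\|_2^2$, combined with a Freedman-type inequality, will then deliver $(\star)$.

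From $(\star)$ the original increment bound is extracted by division. Using $Z_x = R(x)/(\|Ax\|_2 + \|x\|_2)$ and the identity
\[
Z_x - Z_y = \frac{R(x) - R(y)}{\|Ax\|_2 + \|x\|_2} \;+\; \frac{R(y)\bigl(\|Ay\|_2 + \|y\|_2 - \|Ax\|_2 - \|x\|_2\bigr)}{(\|Ax\|_2+\|x\|_2)(\|Ay\|_2+\|y\|_2)},
\]
on the typical event that both denominators are of order $\|s\|_2$ — a statement that itself follows from $(\star)$ applied with $y$ replaced by $x$ — one obtains $\|Z_x - Z_y\|_{\psi_2} \lesssim K\|h\|_2$. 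A short side argument handles the (exponentially small) small-denominator event by a crude deterministic truncation together with the tail of $R$.

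The hardest step will be securing the \emph{linear-in-$K$} scaling in $(\star)$. A black-box application of a vector Hanson-Wright inequality to the bilinear chaos would lose a factor of $K$, producing $K^2 \|s\|_2\|h\|_2$ (and in $\psi_1$ rather than $\psi_2$), which would degrade the final result by exactly the $K$-factor improvement advertised in Theorem~\ref{thm: main}. The linear-in-$K$ gain comes from the a.s.\ constraint $\|A_j\|_2 = 1$: conditionally on $A_j$, the inner product $\langle A_i, A_j\rangle$ is subgaussian with parameter $K$, not $K\|A_j\|_2$ with $\|A_j\|_2$ allowed to float. Converting this pointwise gain into a genuine $\psi_2$ tail for the whole sum is the bottleneck; it requires controlling the random bracket $\sum_k \|w_k\|_2^2$, itself a quadratic chaos in the $A_i$'s, with high probability — either by a bootstrap applying the same concentration principle at a lower level, or through a dedicated Hanson-Wright-type bound on the bracket combined with a stopping-time argument.
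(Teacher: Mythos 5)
Your overall architecture — exploit the diagonal cancellation from $\|A_i\|_2 = 1$ to reduce to a pure off-diagonal chaos, control that chaos, then divide out $\|Ax\|_2 + \|Ay\|_2$ to unsquare — is the same as the paper's. But your central intermediate claim $(\star)$, that
\[
\bigl\|R(x) - R(y)\bigr\|_{\psi_2} \;\lesssim\; K\,\|s\|_2\,\|h\|_2,
\]
is too strong, and this is a genuine gap, not a detail to be filled in.

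A decoupled bilinear chaos such as $R(x)-R(y) = \sum_{i\neq j} s_i h_j \langle A_i,A_j\rangle$ does not have a subgaussian tail in a dimension-free way; it has a mixed Bernstein/Hanson--Wright tail. This is exactly what the paper proves in Lemma~\ref{lem: squared process}: for $x,y$ on the sphere, $\Delta = (\|Ax\|_2^2-\|Ay\|_2^2)/\|x-y\|_2$ satisfies $\Pr\{|\Delta|\ge t\}\le 2\exp[-\tfrac{C}{K^2}(t^2\wedge t)]$, which is subgaussian only up to $t\asymp 1$ and subexponential beyond. The obstruction to anything better is visible in the paper's proof: after decoupling and conditioning, one must control $\E\exp(C\lambda^2 K^2 \|Au\|_2^2)$, and $\|Au\|_2^2$ is itself a random variable with a $\psi_1$ tail, so the MGF of the chaos is finite only for $|\lambda|\lesssim 1/K^2$. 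The hard normalization $\|A_i\|_2=1$ removes the diagonal and gives the linear $K$ dependence, but it does not convert $\psi_1$ to $\psi_2$ at the level of the squared process. Your proposed Freedman argument also cannot deliver $(\star)$: Freedman's inequality produces precisely a Bernstein-type tail governed by the predictable bracket and the increment bound, not a pure $\psi_2$ tail, and the increments $\langle A_k, w_k\rangle$ are only bounded by the random quantities $\|w_k\|_2$.

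The $\psi_2$ behavior of $Z_x - Z_y$ is created only at the unsquaring step, and it needs a case analysis rather than the identity you wrote. Your recursive identity simplifies (after using $R(y)=(\|Ay\|_2-1)(\|Ay\|_2+1)$) to the cleaner $Z_x-Z_y = (R(x)-R(y))/(\|Ax\|_2+\|Ay\|_2)$, which is what the paper uses. But then the denominator is random and can be small, and your phrase ``of order $\|s\|_2$'' is wrong: the denominator has nothing to do with $\|s\|_2=\|x+y\|_2$, and applying $(\star)$ with ``$y$ replaced by $x$'' gives the trivial statement $0\le 0$. The paper's Lemma~\ref{lem: increments on sphere} handles this correctly by splitting on $t\ge 2$ (triangle inequality plus Proposition~\ref{prop: conc of norm}) and $t<2$ (union bound over $\{|\Delta|\ge t/2\}$ and the small-denominator event $\{\|Ax\|_2\le 1/2\}$); the subgaussianity emerges because on the range $t<2$ the subexponential part of the mixed tail never activates, and for $t\ge 2$ one bypasses the chaos entirely. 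To repair your argument, replace $(\star)$ with the mixed-tail statement of Lemma~\ref{lem: squared process} and carry out that case split explicitly; as stated, your proof asserts an intermediate bound that is false and cannot be recovered by the tools you invoke.
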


To see how this leads to Theorem~\ref{thm: main}, let us show how to combine Theorem~\ref{thm: subgaussian increments} with the specialized version of Talagrand’s comparison inequality below. This result is from \cite{LMPV}; for more general versions, see \cite[Section 8]{hdp-book}.

\begin{theorem}[Comparison inequality] \label{theorem_majorm} Let $(Z_x)_{x\in T}$ be a random process on a bounded set $T\subset \R^n$. 
Assume that the process has subgaussian increments, that is there exists $M\geq 0$ such that \[ \|Z_x-Z_y\|_{\psi_2} \leq M\|x-y\|_2 \;\text{ for all }\; x,y \in T. \] 
Then \[ \E \sup_{x,y\in T}|Z_x-Z_y|\leq C M \, \E\sup_{x\in T}\ip{g}{x}, \]
 where $g\sim N(0,I_n)$. Moreover, for any $u\geq 1$, the event \[ \sup_{x,y\in T}|Z_x-Z_y|\leq CM \left( \E\sup_{x\in T}\ip{g}{x}+u\cdot \mathrm{diam}(T) \right) \] holds with probability at least $1-e^{-u^2}$. Here $\mathrm{diam}(T):=\sup_{x,y\in T}\|x-y\|_2$. \end{theorem}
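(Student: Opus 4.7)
The plan is to derive this as a packaged consequence of two well-known results: the generic chaining tail bound for subgaussian processes, and Talagrand's majorizing measure theorem for Gaussian processes. Neither ingredient is reproved here; the content of the statement is their combination with the metric $d(x,y) = M\|x-y\|_2$.

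First, recall Talagrand's $\gamma_2$ functional: for a metric space $(T,d)$, $\gamma_2(T,d) \coloneqq \inf \sup_{t\in T}\sum_{n\geq 0} 2^{n/2}\,\mathrm{diam}(A_n(t))$, where the infimum runs over admissible sequences of partitions $\{A_n\}_{n\geq 0}$ of $T$. By hypothesis, the process $(Z_x)_{x\in T}$ has subgaussian increments with respect to $d(x,y) = M\|x-y\|_2$. Apply the generic chaining tail bound (e.g.\ \cite[Theorem~8.5.5]{hdp-book}, originally due to Talagrand) to this process. This yields an absolute constant $C$ such that
\[
\E \sup_{x,y\in T}|Z_x - Z_y| \;\leq\; C M \,\gamma_2(T,\|\cdot\|_2),
\]
and for every $u\geq 1$,
\[
\BigPr{\sup_{x,y\in T}|Z_x-Z_y| \;>\; C M\bigl(\gamma_2(T,\|\cdot\|_2) + u\cdot \mathrm{diam}(T)\bigr)} \;\leq\; e^{-u^2}.
\]

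Second, the canonical Gaussian process $G_x \coloneqq \langle g, x\rangle$ indexed by $T$ has $\|G_x - G_y\|_{\psi_2} \asymp \|x-y\|_2$. Talagrand's majorizing measure theorem (see \cite[Theorem~8.6.1]{hdp-book}) asserts the two-sided equivalence
\[
\gamma_2(T,\|\cdot\|_2) \;\asymp\; \E \sup_{x\in T} G_x \;=\; \E \sup_{x\in T} \langle g, x\rangle.
\]
Substituting this equivalence into the two displays from the first step, and absorbing the resulting universal constants into $C$, delivers exactly the expectation bound and the probability bound in the statement.

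The main obstacle is not technical but organizational: the two imported theorems have deep and lengthy proofs, but the present statement contains no additional content beyond combining them. A minor bookkeeping point is that the generic chaining tail bound is often stated for $\sup_{x\in T}(Z_x - Z_{x_0})$ with a fixed anchor point $x_0$; passing to the two-sided supremum $\sup_{x,y\in T}|Z_x - Z_y|$ costs only a factor of $2$, which is absorbed in $C$. Similarly, the version of majorizing measure one cites yields $\gamma_2 \asymp \E\sup_x \langle g,x\rangle$ up to absolute constants, which is harmless after adjusting $C$.
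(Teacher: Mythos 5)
Your proposal is correct and matches the paper's treatment: the paper also handles this theorem purely by citation (Talagrand's comparison/chaining bound for the expectation, Dirksen's $\gamma_2$ tail bound for the high-probability part, together with the majorizing measure theorem to pass from $\gamma_2(T,\|\cdot\|_2)$ to $\E\sup_{x\in T}\ip{g}{x}$), which is exactly the combination you spell out. The only bookkeeping you leave implicit --- converting a $1-2e^{-u^2}$ guarantee into $1-e^{-u^2}$ for $u\geq 1$ by enlarging $C$ --- is routine and harmless.
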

The first part of Theorem~\ref{theorem_majorm} can be found in \cite[Theorem 2.4.12]{talagrand2014upper} and the second part can be found in \cite[Theorem 3.2]{2015dirksen}.  See also \cite[Theorem 8.5.5]{hdp-book}.

Theorem~\ref{thm: main} then follows by observing that for any $x_0 \in T$,
\[\sup_{x \in T} \abs{Z_x} \leq \abs{Z_{x_0}} + \sup_{x \in T} \abs{Z_x - Z_{x_0}} \leq \abs{Z_{x_0}} + \sup_{x,y \in T} \abs{Z_x - Z_y}.\]
The second summand is controlled by Theorems \ref{theorem_majorm} and \ref{thm: subgaussian increments}.  The first summand is a single subgaussian random variable.  Indeed, by Proposition~\ref{prop: conc of norm} below, $\norm{Z_{x_0}}_{\psi_2} \lesssim K \norm{x_0}_2 \leq K \rad(T)$.  It is relatively straightforward to complete the proof of Theorem \ref{thm: main}, see \cite[Section 4]{jeong2022sub} for the full detailed argument.

We now build the proof of Theorem \ref{thm: subgaussian increments}, starting with two special cases.

\subsection{Unit vector $x$, zero vector $y$}

Let us first prove a partial case of Theorem~\ref{thm: subgaussian increments} for $\norm{x}_2=1$ and $y=0$. 

\begin{proposition}[Concentration of norm]	\label{prop: conc of norm}
	Let $A$ be a random matrix satisfying the assumptions of Theorem~\ref{thm: main}. Then for any $x \in S^{n-1}$ we have
	$$
	\norm[\big]{ \norm*{Ax}_2 - 1}_\psitwo \lesssim K. 
	$$
\end{proposition}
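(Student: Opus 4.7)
The strategy is to reduce the proposition to a subexponential ($\psione$) estimate on the scalar
\[
S \coloneqq \norm{Ax}_2^2 - 1.
\]
Since $\norm{A_i}_2 = 1$ a.s.\ and $\norm{x}_2 = 1$, and the $A_i$ are independent, expansion gives
\[
\norm{Ax}_2^2 = \sum_i x_i^2 + 2\sum_{i<j} x_i x_j \ip{A_i}{A_j} = 1 + S,\qquad S = 2\sum_{i<j} x_i x_j \ip{A_i}{A_j}.
\]
From the factorization $S = (\norm{Ax}_2-1)(\norm{Ax}_2+1)$ and the trivial bound $\norm{Ax}_2+1\ge 1$, the event $\{|\norm{Ax}_2-1|>t\}$ is contained in $\{|S|\ge\max(t,t^2)\}$. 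A bound $\norm{S}_\psione\lesssim K^2$ therefore translates (at $s=t^2$, and assuming without loss of generality $K\ge 1$) into exactly the target bound $\norm{\,\norm{Ax}_2-1\,}_\psitwo\lesssim K$. So the task reduces to proving $\norm{S}_\psione\lesssim K^2$.

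To bound $\norm{S}_\psione$, the plan is to decouple and then bootstrap the moments. Let $A' = (A_1',\dots,A_n')$ be an independent copy of $A$. Since $S = \sum_{i\ne j} x_i x_j \ip{A_i}{A_j}$ is an off-diagonal second-order $U$-statistic, the de la Pe\~na--Montgomery-Smith decoupling inequality gives, in every $L^p$,
\[
\norm{S}_{L^p} \lesssim \Big\|\sum_{i\ne j} x_i x_j \ip{A_i}{A_j'}\Big\|_{L^p} \le \norm{\ip{Ax}{A'x}}_{L^p} + 1,
\]
where the $+1$ absorbs the a.s.\ bound $|\sum_i x_i^2 \ip{A_i}{A_i'}|\le\sum_i x_i^2 \norm{A_i}_2\norm{A_i'}_2=1$. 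Set $T \coloneqq \ip{Ax}{A'x}$; conditioning on $A$, we view $T = \sum_j x_j \ip{Ax}{A_j'}$ as a sum of independent mean-zero subgaussians in $A'$, each of $\psitwo$-norm at most $K\norm{Ax}_2$. Adding variance proxies gives $\norm{T\mid A}_\psitwo \lesssim K\norm{Ax}_2$, whence $\E[|T|^p\mid A]\lesssim (CK)^p p^{p/2} \norm{Ax}_2^p$. Taking $\E_A$ and using $\norm{Ax}_2^p=(1+S)^{p/2}\le 2^{p/2}(1+|S|^{p/2})$ (valid because $1+S=\norm{Ax}_2^2\ge 0$) produces the self-referential recursion
\[
\E|S|^p \lesssim (C'K)^p p^{p/2}\bigl(1+\E|S|^{p/2}\bigr).
\]

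A dyadic induction closes this recursion with solution $\norm{S}_{L^p}\lesssim K^2 p$, i.e.\ $\norm{S}_\psione\lesssim K^2$. The base case $p=2$ is a direct computation: by independence only the diagonal pairs $(i,j)=(k,l)$ survive, so $\E S^2 = 4\sum_{i<j} x_i^2 x_j^2 \E\ip{A_i}{A_j}^2 \lesssim K^2$, where $\E\ip{A_i}{A_j}^2\lesssim K^2$ follows by conditioning on $A_j$ (making $\ip{A_i}{A_j}$ a one-dimensional marginal of $A_i$ along a unit vector). The main obstacle is this very bootstrap: the conditional estimate for $T$ pays a factor $\norm{Ax}_2^p$ which is itself only controlled through $S$. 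It closes because the $\sqrt{p}$ arising from subgaussianity of $T\mid A$ and the $p/2$-exponent from $\norm{Ax}_2^2 = 1+S$ combine to give exactly the $p^p$ growth characteristic of a $\psi_1$ variable; any weaker scaling (say $\norm{T\mid A}_\psitwo$ growing superlinearly in $\norm{Ax}_2$) would fail to close and would destroy the linear-in-$K$ dependence that distinguishes this column-normalized model from the independent-rows setting of Theorem~\ref{thm: old result}.
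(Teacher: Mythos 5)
Your overall strategy -- expand $\norm{Ax}_2^2-1$ into a purely off-diagonal chaos $S=\sum_{i\ne j}x_ix_j\ip{A_i}{A_j}$ (which crucially uses $\norm{A_i}_2=1$), decouple, condition on $A$, and exploit the recursive identity $\norm{Ax}_2^2=1+S$ to bootstrap, then convert via the elementary factorization $|z-1|\ge\delta\Rightarrow|z^2-1|\ge\delta\vee\delta^2$ -- is exactly the paper's strategy. The difference is in how the bootstrap is closed: the paper closes the MGF inequality $\E\Phi_\lambda(S)\le\exp(C\lambda^2K^2)\,\E\exp(C\lambda^2K^2 S)$ by Jensen at $|\lambda|\le c/K^2$, while you close a self-referential moment inequality $\E|S|^p\lesssim(C'K)^pp^{p/2}(1+\E|S|^{p/2})$ by dyadic induction. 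Both are legitimate routes.

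However, there is a genuine gap. Your target intermediate claim is $\norm{S}_{\psione}\lesssim K^2$, proved by induction from the base case $\norm{S}_{L^2}\lesssim K$. For the base case to read $\norm{S}_{L^2}\lesssim K^2\cdot 2$ you need $K\gtrsim 1$, and you wave this away with ``without loss of generality $K\ge 1$.'' But this is not WLOG: the hypotheses $\norm{A_i}_2=1$ a.s.\ and $\norm{A_i}_{\psitwo}\le K$ do not impose any universal lower bound on $K$. For example, if $A_i$ is uniform on $S^{m-1}$, then $\norm{A_i}_{\psitwo}\asymp 1/\sqrt{m}$, so $K$ can be arbitrarily small, and that regime is precisely where the bound $\lesssim K\gamma(T)$ is most powerful. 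For $K<1$ the claim $\norm{S}_{\psione}\lesssim K^2$ is simply false (already at $p=2$: for $x=(e_1+e_2)/\sqrt2$ one has $S=\ip{A_1}{A_2}$ with $\E S^2\asymp K^2$, so $\norm{S}_{L^2}\asymp K\gg K^2$), so your induction cannot produce it and your reduction step fails.

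The fix is to aim for the mixed-tail estimate rather than a pure $\psi_1$ bound. The same recursion closes on the induction hypothesis $\norm{S}_{L^p}\le C_0(K\sqrt{p}+K^2p)$: the base case $\norm{S}_{L^2}\lesssim K$ fits for any $K>0$, and the inductive step goes through since $(K\sqrt{p}+K^2p)^{p/2}\ge(K^2p)^{p/2}=K^pp^{p/2}$ is exactly what is needed to absorb the $(C'K)^pp^{p/2}$ prefactor. This is equivalent to the Bernstein-type tail $\Prob*{|S|\ge t}\le 2\exp\!\bigl[-\tfrac{C}{K^2}(t^2\wedge t)\bigr]$ that the paper derives from the MGF bound, and plugging $t\vee t^2$ into that tail gives $2\exp(-Ct^2/K^2)$ uniformly, i.e.\ $\norm{\norm{Ax}_2-1}_{\psitwo}\lesssim K$ for all $K>0$. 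With this adjustment your argument is correct and gives a clean moment-space alternative to the paper's Jensen/MGF bootstrap.
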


To prove this result, let us first note that $Ax$ is a subgaussian random vector:

\begin{lemma}[Sum of independent subgaussians]	\label{lem: sum of subgaussians}
	Let $X_1,\ldots,X_n$ be independent, mean zero, subgaussian random vectors in $\R^m$. Then 
	$$
	\left\|\sum_{i=1}^n X_i\right\|_\psitwo^2 \le C \sum_{i=1}^n \norm{X_i}_\psitwo^2.
	$$
	In particular, under the assumptions of Theorem~\ref{thm: main}, for any unit vector $x \in \R^n$ we have
	$$
	\norm{Ax}_\psitwo \le CK. 
	$$
\end{lemma}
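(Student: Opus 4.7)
The plan is to reduce the vector statement to the well-known scalar Hoeffding-type bound for sums of independent subgaussian random variables via the definition of the subgaussian norm of a vector in \eqref{eq: sub-gauss vector}.

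First I would fix an arbitrary unit vector $u \in S^{m-1}$ and consider the scalar random variable $\ip{\sum_{i=1}^n X_i}{u} = \sum_{i=1}^n \ip{X_i}{u}$. The summands $\ip{X_i}{u}$ are independent, mean zero, and scalar subgaussian with $\norm{\ip{X_i}{u}}_\psitwo \le \norm{X_i}_\psitwo$ by the definition of the subgaussian norm of a random vector. I would then invoke the standard scalar result (see, e.g., Proposition 2.6.1 in \cite{hdp-book}) which says that for independent mean-zero subgaussian scalars $Y_i$,
$$
\norm{\sum_{i=1}^n Y_i}_\psitwo^2 \le C \sum_{i=1}^n \norm{Y_i}_\psitwo^2.
$$
Applying this to $Y_i=\ip{X_i}{u}$ yields
$$
\norm[\Big]{\ip[\Big]{\sum_{i=1}^n X_i}{u}}_\psitwo^2 \le C \sum_{i=1}^n \norm{X_i}_\psitwo^2,
$$
and taking the supremum over $u \in S^{m-1}$ gives exactly the first claim of the lemma.

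For the second (``in particular'') claim, I would decompose $Ax = \sum_{i=1}^n x_i A_i$ and apply the first part to the independent mean-zero subgaussian vectors $X_i := x_i A_i$. The homogeneity of the subgaussian norm (evident from the definition) gives $\norm{X_i}_\psitwo = \abs{x_i}\norm{A_i}_\psitwo \le K\abs{x_i}$, and hence
$$
\norm{Ax}_\psitwo^2 \le C \sum_{i=1}^n K^2 x_i^2 = C K^2 \norm{x}_2^2 = CK^2,
$$
which yields $\norm{Ax}_\psitwo \lesssim K$ as claimed. Note that the assumptions $\norm{A_i}_2 = 1$ a.s.\ and isotropy of the columns are not used here: only independence, mean zero, and the subgaussian bound matter.

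There is no real obstacle in this lemma; the only point requiring mild care is the reduction step, namely recognizing that the ``one-dimensional marginal'' form of the vector subgaussian norm immediately linearizes the problem to the classical scalar inequality, after which everything is routine.
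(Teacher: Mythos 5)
Your proof is correct and follows exactly the route the paper has in mind: reduce the vector claim to the scalar Hoeffding-type bound (\cite[Proposition~2.6.1]{hdp-book}) via the definition of the vector subgaussian norm as a supremum over one-dimensional marginals, then expand $Ax=\sum_i x_i A_i$ and apply the first part with homogeneity. The paper's proof is just a terser statement of the same argument.
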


\begin{proof}[Proof of Lemma~\ref{lem: sum of subgaussians}.]
	The first part is an easy consequence of Definition \eqref{eq: sub-gauss vector} and \cite[Proposition~2.6.1]{hdp-book}. The second part follows if we expand $Ax = \sum_{i=1}^n x_i A_i$ and apply the first part to $X_i = x_i A_i$. 
\end{proof}

We now proceed with a moment generating function (MGF) bound.  
For any fixed $\l \in \R$, consider the function 
\begin{equation}	\label{eq: Phi lambda}
	\Phi_\lambda(z) \coloneqq \exp(\lambda z). 
\end{equation}
Then the MGF of a random variable $Z$ can be expressed as $\E \Phi_\lambda(Z)$.

\begin{lemma} \label{lem: mgf} Let $A$ and $x$ be as in Proposition \ref{prop: conc of norm}.  Set $S:= \norm{Ax}_2^2 - 1$.  Then
\[
\E\Phi_\lambda(S) 
\leq \exp(C \lambda^2 K^2) 
\quad \mbox{for} \quad 
\abs{\lambda} \leq \frac{c}{K^2}.
\]
\end{lemma}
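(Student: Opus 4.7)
\noindent\textbf{Proof proposal for Lemma~\ref{lem: mgf}.} The plan is to exploit the hard normalization $\norm{A_i}_2 = 1$ to kill the diagonal of the quadratic form defining $S$, and then bound the MGF via a backward martingale-style recursion along the columns.

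First, I would rewrite $S$ as a purely off-diagonal quadratic form:
\[
S = \norm{Ax}_2^2 - 1 = \sum_{i,j} x_i x_j \ip{A_i}{A_j} - \sum_i x_i^2 \norm{A_i}_2^2 = \sum_{i \ne j} x_i x_j \ip{A_i}{A_j},
\]
where the cancellation uses both $\norm{A_i}_2 = 1$ almost surely and $\norm{x}_2 = 1$. This is where the normalization hypothesis genuinely enters.

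Next, introduce the partial sums $M_k := \sum_{i \ne j,\, \max(i,j) \le k} x_i x_j \ip{A_i}{A_j}$ with respect to the filtration $\FF_k := \sigma(A_1,\dots,A_k)$, so that $M_0 = 0$ and $M_n = S$. Setting $v_{k-1} := \sum_{i<k} x_i A_i$, the increment is
\[
D_k := M_k - M_{k-1} = 2 x_k \ip{v_{k-1}}{A_k}.
\]
Conditional on $\FF_{k-1}$, the vector $v_{k-1}$ is fixed while $A_k$ remains mean-zero subgaussian with $\norm{A_k}_\psitwo \le K$, so $\ip{v_{k-1}}{A_k}$ is mean-zero subgaussian with norm at most $K \norm{v_{k-1}}_2$. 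The standard MGF bound for mean-zero subgaussian variables then gives
\[
\E\!\left[\exp(\mu D_k) \,\middle|\, \FF_{k-1}\right] \le \exp(C_1 \mu^2 x_k^2 K^2 \norm{v_{k-1}}_2^2).
\]
The crux is now the self-referential identity
\[
\norm{v_{k-1}}_2^2 = \sigma_{k-1}^2 + M_{k-1}, \qquad \sigma_{k-1}^2 := \sum_{i<k} x_i^2 \le 1,
\]
which again uses $\norm{A_i}_2 = 1$. Plugging it in and peeling off one column at a time produces the backward recursion
\[
\E \exp(\mu M_k) \le \exp(C_1 \mu^2 x_k^2 K^2) \cdot \E \exp\!\bigl((\mu + C_1 \mu^2 x_k^2 K^2) M_{k-1}\bigr).
\]
Iterating from $\mu_n := \lambda$ with the update $\mu_{k-1} := \mu_k + C_1 \mu_k^2 x_k^2 K^2$, one arrives at
\[
\E \exp(\lambda S) \le \exp\!\Bigl( C_1 K^2 \sum_{k=1}^n \mu_k^2 x_k^2 \Bigr).
\]

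The main obstacle — and the source of the restriction $|\lambda| \le c/K^2$ — is preventing the auxiliary sequence $|\mu_k|$ from blowing up under the iteration, since every update adds a non-negative correction. A short backward induction shows that $|\mu_k| \le 2|\lambda|$ for all $k$ provided $4 C_1 |\lambda| K^2 \sum_j x_j^2 = 4 C_1 |\lambda| K^2 \le 1$, which is exactly the hypothesis for $c$ small enough. Under this control, the exponent above is at most $4 C_1 \lambda^2 K^2 \sum_k x_k^2 = 4 C_1 \lambda^2 K^2$, which gives the claimed bound after adjusting the absolute constant.
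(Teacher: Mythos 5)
Your proof is correct, and it takes a genuinely different route from the paper's. The paper proves Lemma~\ref{lem: mgf} by decoupling: it introduces an independent copy $A'$ of $A$, applies the decoupling inequality to replace $\sum_{i\ne j} x_i x_j \ip{A_i}{A_j}$ with $4\ip{Ax}{A'x}$, integrates out $A'$ conditionally on $A$ to obtain $\E\Phi_\lambda(S)\le \E\exp(C\lambda^2 K^2\norm{Ax}_2^2)$, and then substitutes $\norm{Ax}_2^2 = S+1$ to get the one-shot recursive inequality $\E\Phi_\lambda(S)\le \exp(C\lambda^2 K^2)\,\E\exp(C\lambda^2 K^2 S)$, which it closes with Jensen's inequality. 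You instead expose the columns one at a time: the Doob decomposition of the off-diagonal quadratic form along $\FF_k=\sigma(A_1,\dots,A_k)$, the conditional subgaussian MGF bound for the increment $D_k = 2x_k\ip{v_{k-1}}{A_k}$, and then the normalization-driven identity $\norm{v_{k-1}}_2^2 = \sigma_{k-1}^2 + M_{k-1}$ to peel off a parameter recursion $\mu_{k-1}=\mu_k + C_1\mu_k^2 x_k^2 K^2$. Both arguments hinge on exactly the same structural fact --- hard column normalization kills the diagonal of the quadratic form and creates a self-referential relation between the random quantity and its own partial norm --- and both therefore require $|\lambda|\lesssim 1/K^2$ to keep the recursion under control. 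Your approach avoids the decoupling machinery entirely, which makes it more elementary and self-contained, at the price of the bookkeeping needed to bound the $\mu_k$.

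One small caveat: the step asserting $|\mu_k|\le 2|\lambda|$ by a ``short backward induction'' needs to be done cumulatively, not one step at a time. A single update can give $|\mu_{k-1}|\le |\mu_k|(1+2C_1|\lambda| x_k^2 K^2)$, which does not alone stay below $2|\lambda|$; instead one tracks the product
\[
|\mu_k| \le |\lambda|\prod_{j>k}\bigl(1+2C_1|\lambda| x_j^2 K^2\bigr)\le |\lambda|\exp\!\Bigl(2C_1|\lambda| K^2\sum_j x_j^2\Bigr)=|\lambda|e^{2C_1|\lambda| K^2},
\]
and the condition $4C_1|\lambda| K^2\le 1$ ensures $e^{2C_1|\lambda| K^2}\le e^{1/2}<2$, so the induction hypothesis propagates. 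The hypothesis $|\mu_j|\le 2|\lambda|$ for $j>k$ is used in the intermediate bound $|\mu_j(1+C_1\mu_j x_j^2 K^2)|\le |\mu_j|(1+2C_1|\lambda| x_j^2 K^2)$. With this fleshed out the argument is complete and correct.
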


\begin{proof}
Expanding the norm, we get
	$$
	\norm{Ax}_2^2 
	= \norm{\sum_{i=1}^n x_i A_i}_2^2
	= \sum_{i=1}^n x_i^2 \norm{A_i}_2^2 + \sum_{i \ne j} x_i x_j \ip{A_i}{A_j}. 
	$$
	By assumption we have $\norm{A_i}_2=1$ and $\norm{x}_2=1$.  Thus, 
\[\sum_{i=1}^n x_i^2 \norm{A_i}_2^2 = \sum_{i=1}^n x_i^2 = 1\]
an so 
	\begin{equation}	\label{eq: S}
		S 	= \sum_{i \ne j} x_i x_j \ip{A_i}{A_j}. 
	\end{equation}
	Since $\norm{A_i}_2=1$ by assumption, using Cauchy-Schwarz we can see that $S$ is a bounded random variable, so $\Phi_\lambda(S)$ is finite for all $\l$. Let's now get a quantitative bound.

	Fix any $\l \in \R$ and let $A'$ be an independent copy of $A$. Then
	\begin{align*}
	\E \Phi_\lambda(S)
		&= \E \Phi_\lambda \left( \sum_{i \ne j} x_i x_j \ip{A_i}{A_j} \right) \\
		&\leq \E \Phi_\lambda \left(4 \sum_{i,j=1}^n x_i x_j \ip{A_i}{A'_j} \right)
			\quad \text{(decoupling \cite[Exercise~6.1.4]{hdp-book})}\\
		&= \E \Phi_\lambda \left(4 \ip{Ax}{A'x} \right).\\
	\end{align*}
		Let us now condition on $A$.  Recall, by Lemma \ref{lem: sum of subgaussians}, $\norm{A'x}_\psitwo \leq C K$. 
	Then the random variable $\ip{A x}{A' x}$ is (conditionally) subgaussian, and its conditional subgaussian norm is bounded by $C K\norm{A x}_2$, see \cite[Definition~3.4.1]{hdp-book}. Thus
	\begin{align*}
	\E \Phi_\lambda \left( 4\ip{Ax}{A'x} \right)
		&= \E_A \E_{A'} \exp(4 \l \ip{Ax}{A'x}) \\
		&\le \E_A \exp(C \l^2 K^2 \norm{Ax}_2^2)
			\quad \text{(by Equation~\cite[(2.16)]{hdp-book})}.
	\end{align*}
	
	We have shown that
	\[\E \Phi_\lambda(S) \leq \E \exp(C \l^2 K^2 \norm{Ax}_2^2).\]	
	Recall that $S := \norm{Ax}_2^2 - 1$.  We now replace $\norm{Ax}_2^2$ with $S + 1$ in the equation above to yield a recursive equation for the moment generating function of $S$.  This gives 
\begin{equation}  \label{eq: recursive}  
    \E\Phi_\l(S) \leq \exp(C \l^2 K^2) \cdot \E \exp(C \l^2 K^2 S)
    \quad \text{for all } \l \in \R.
\end{equation}

Now assume that
$$
0 \le \l \leq 1/(2 C K^2).
$$
Then $0\leq C\l K^2 \leq 1/2$, and Jensen inequality gives
\[
\E \exp(C \l^2 K^2 S) 
= \E \big[ \exp(\l S)^{C\l K^2} \big] 
\leq \left[\E\exp(\l S)\right]^{C \l K^2} 
= [\E\Phi_\l(S)]^{C \l K^2}.
\]
Insert this into Equation~\ref{eq: recursive} and rearrange the terms to get
$$
\E\Phi_\l(S) 
\leq \exp(C \l^2 K^2 )^{1/(1 - C\l K^2)}.   
$$
Since $1 - C\l K^2 \geq 1/2$ by assumption, we conclude that 
\begin{equation}    \label{eq: Phi bound for positive}
    \E\Phi_\l(S) \leq \exp(2C \l^2 K^2 )
    \quad \text{for all } 0 \le \l \leq 1/(2 C K^2),
\end{equation}
as desired.

Finally, let us lift the nonnegativity assumption and simply assume that 
$$
\abs{\l} \leq 1/(2 C K^2).
$$
We have 
\begin{align*}
    \E \exp(C \l^2 K^2 S)
    &= \E \Phi_\mu(S)
        \quad \text{(where $\mu = C \l^2 K^2$)} \\
    &\le \exp(2C \mu^2 K^2 )
        \quad \text{(using \eqref{eq: Phi bound for positive}, since $0 \le \mu \leq 1/(2 C K^2)$)} \\
    &\le \exp(2C \l^2 K^2 )
        \quad \text{(since $\mu \le \abs{\l})$}.
\end{align*}
Substitute this into \eqref{eq: recursive} to get 
$$
\E\Phi_\l(S) \leq \exp(3C \l^2 K^2 )
\quad \text{for all } \abs{\l} \leq 1/(2 C K^2).
$$
The proof is complete.
\end{proof}

\begin{remark}[Why normalized columns give linear dependence on $K$]
We can now point out the first place where the assumption of normalized columns allows linear dependence on $K$ in our main result.  In the proof above, 
\[S = \norm{Ax}_2^2 - 1 = \sum_{i=1}^n x_i^2 (\norm{A_i}_2^2-1) + \sum_{i \ne j} x_i x_j \ip{A_i}{A_j}.\]
Since the columns of $A$ have norm 1, the first term disappears, leaving just a linear combination of the inner products of different columns (see Equation~\eqref{eq: S}).  After decoupling, this gives the neat recursive equation \eqref{eq: recursive} for $S$ which ultimately allows a tight control of its tail.  If we had not assumed normalized columns then the variations in the norms of $A_i$ would have necessarily not allowed this (as follows from \cite[Proposition 4.5]{jeong2022sub}).
\end{remark}

The following lemma converts the bound on the moment generating function of $S$ into a tail bound.

\begin{lemma} \label{lem: bernstein}
Let $S$ be as in Lemma \ref{lem: mgf}. Then for any $t \geq 0$
\[\Pr{\abs{S} \geq t} \leq 2\exp\left(-\frac{C}{K^2}\left(t^2 \wedge t \right)\right).\]
\end{lemma}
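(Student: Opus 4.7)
The plan is to derive the tail bound from the moment generating function estimate in Lemma~\ref{lem: mgf} via a standard Chernoff argument, carefully optimizing over $\lambda$ inside the admissible range $|\lambda| \le c/K^2$. This is the textbook passage from a bound of the form $\E \exp(\lambda S) \le \exp(C\lambda^2 K^2)$ on a small interval around zero to a sub-exponential (Bernstein-type) tail, so no new ingredient is needed.

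First I would handle the upper tail. For any $\lambda \in [0, c/K^2]$, Markov's inequality applied to $\Phi_\lambda(S) = \exp(\lambda S)$ together with Lemma~\ref{lem: mgf} gives
$$
\Pr{S \ge t} \le e^{-\lambda t}\, \E \exp(\lambda S) \le \exp\bigl(-\lambda t + C \lambda^2 K^2\bigr).
$$
The unconstrained minimizer of the right-hand side is $\lambda^\star = t/(2CK^2)$. I would then split into two cases. If $t$ is small enough that $\lambda^\star \le c/K^2$, i.e.\ $t \lesssim 1$, I plug $\lambda = \lambda^\star$ in and obtain a sub-Gaussian decay $\exp(-c t^2/K^2)$. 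Otherwise $t \gtrsim 1$, and I set $\lambda = c/K^2$ (the largest admissible value), which gives
$$
-\lambda t + C\lambda^2 K^2 = -\frac{c t}{K^2} + \frac{C c^2}{K^2} \le -\frac{c' t}{K^2},
$$
the last inequality holding because $t$ is bounded below by an absolute constant, after absorbing $Cc^2$ into $c t$. The two cases combine to give $\Pr{S \ge t} \le \exp\bigl[-(C/K^2)(t^2 \wedge t)\bigr]$.

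For the lower tail I would repeat the argument with $-\lambda$ in place of $\lambda$. Since Lemma~\ref{lem: mgf} is valid for all $|\lambda| \le c/K^2$, the same bound $\E \exp(-\lambda S) \le \exp(C\lambda^2 K^2)$ holds, and the Chernoff computation proceeds identically to yield $\Pr{S \le -t} \le \exp\bigl[-(C/K^2)(t^2 \wedge t)\bigr]$. A union bound on the two tails produces the factor $2$ in the final statement.

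There is really no serious obstacle here; the only thing that requires a moment's care is bookkeeping the constants so that the threshold separating the sub-Gaussian and sub-exponential regimes matches with the minimum $t^2 \wedge t$, and absorbing the $Cc^2/K^2$ correction term in the large-$t$ case into the linear term by choosing $c$ small enough relative to $C$. Both adjustments only affect the absolute constant $C$ in the conclusion, which the statement leaves free.
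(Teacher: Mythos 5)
Your proposal is correct and follows essentially the same Chernoff argument as the paper: apply Markov to $\exp(\lambda S)$, use the MGF bound from Lemma~\ref{lem: mgf}, take $\lambda = (t/(2CK^2)) \wedge (c/K^2)$, and handle the lower tail symmetrically with a union bound supplying the factor $2$. The only difference is presentational: where you split explicitly into a small-$t$ (sub-Gaussian) and large-$t$ (sub-exponential) case, the paper notes that the constrained choice of $\lambda$ always satisfies $C\lambda^2 K^2 \le \lambda t/2$, so that $\Pr{S \ge t} \le \exp(-\lambda t/2)$ gives both regimes in a single step.
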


\begin{proof}
Let $0 \le \l \leq c/K^2$, as in Lemma \ref{lem: mgf}. Then
$$
\Prob*{S \ge t}
= \Prob*{e^{\l S} \ge e^{\l t}}
\le e^{-\l t} \E \Phi_\l(S)
\le \exp(-\l t + C \l^2 K^2),
$$
where the last inequality follows from Lemma \ref{lem: mgf}.

The optimal choice of $\l$ subject to the constraint $\l\leq c/K^2$ is 
$\l = t/(2 C K^2) \wedge c/K^2$. With this choice, we obviously have $\l \leq t/(2 C K^2)$, which is equivalent to    
$C \l^2 K^2 \leq \l t/2$. Thus,  
\[\Pr{S \ge t} \le \exp(-\l t + C\l^2 K^2 )\leq\exp(-\l t/2) \le \exp\left(-\frac{C'}{K^2} \left(t^2 \wedge t\right)\right).\]
$\Pr{-S \ge t}$ may be bounded similarly, and the the proof is completed via a union bound.
\end{proof}

\begin{proof}[Proof of Proposition~\ref{prop: conc of norm}]

By Lemma \ref{lem: bernstein}, for $t \geq 0$
$$
\Prob*{\abs*{\norm{Ax}_2^2-1} \ge t} 
\le 2\exp\left( -\frac{C}{K^2}\left(t^2 \wedge t\right) \right).
$$
Now we use the following elementary observation, which holds for all $z,\d \geq 0$:
$$
\abs{z-1} \ge \d \quad \text{implies} \quad \abs{z^2-1} \ge \d \vee \d^2.
$$
We obtain 
$$
\Prob*{\abs*{\norm{Ax}_2 - 1} \geq \d} 
\le \Prob*{\abs*{\norm{Ax}_2^2 - 1} \geq \d \vee \d^2}
\le 2\exp\left( -\frac{C\d^2}{K^2} \right),
$$
since if $t=\d \vee \d^2$ then $t^2 \wedge t = \d^2$. The proof is complete.
\end{proof}

\subsection{Vectors on the sphere, squared process}

We now show that $\norm{Ax}_2^2 - \norm{Ay}_2^2$ has a mixed tail bound for $x,y \in S^{n-1}$.  We will then convert that into a subgaussian tail bound for $Z_x - Z_y$ in the next section.

\begin{lemma}[Squared process] \label{lem: squared process}
	Let $A$ be a random matrix satisfying the assumptions of Theorem~\ref{thm: main}. 
	Fix any pair of vectors $x,y \in S^{n-1}$.  
	Then the random variable 
	$$
	\Delta \coloneqq \frac{\norm{Ax}_2^2 - \norm{Ay}_2^2}{\norm{x-y}_2}
	$$
	satisfies 
	$$
	\Pr{\abs{\Delta} \ge t} 
	\le 2\exp\left(-\frac{C}{K^2}\left(t^2 \wedge t\right) \right).
	$$
\end{lemma}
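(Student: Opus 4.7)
The plan is to follow the same two-step strategy as in Lemmas~\ref{lem: mgf} and~\ref{lem: bernstein}: first establish an MGF bound $\E\Phi_\lambda(\Delta)\le \exp(CK^2\lambda^2)$ on $|\lambda|\le c/K^2$, and then apply the Chernoff argument of Lemma~\ref{lem: bernstein} to $\pm\Delta$ to obtain the mixed tail bound. The key algebraic ingredient is that the hypothesis $\norm{A_i}_2=1$, combined with $\norm{x}_2=\norm{y}_2=1$, cancels the diagonal contribution in
$$
\norm{Ax}_2^2 - \norm{Ay}_2^2 = \langle A(x-y),\, A(x+y)\rangle.
$$
Setting $a := (x-y)/\norm{x-y}_2$ and $b := x+y$, one has $\norm{a}_2=1$, $\norm{b}_2\le 2$, and $\Delta = \langle Aa, Ab\rangle$. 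Moreover, $\langle a,b\rangle \propto \langle x-y,x+y\rangle = \norm{x}_2^2-\norm{y}_2^2 = 0$, so the remaining diagonal $\sum_i a_ib_i\norm{A_i}_2^2$ also vanishes and $\Delta = \sum_{i\ne j} a_ib_j\langle A_i, A_j\rangle$---an off-diagonal bilinear form of exactly the same shape as the quantity $S$ treated in Lemma~\ref{lem: mgf}.

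From here the argument mimics the proof of Lemma~\ref{lem: mgf}. Decoupling \cite[Exercise~6.1.4]{hdp-book} yields
$$
\E\Phi_\lambda(\Delta) \le \E\Phi_\lambda\big[4\langle Aa,A'b\rangle\big]
$$
for $A'$ an independent copy of $A$. Conditioning on $A$ and invoking Lemma~\ref{lem: sum of subgaussians}, $\norm{A'b}_\psitwo \le CK\norm{b}_2\le 2CK$, so $\langle Aa, A'b\rangle$ is conditionally subgaussian with norm $\lesssim K\norm{Aa}_2$. Integrating out $A'$ and setting $\mu := C_1\lambda^2 K^2$,
$$
\E\Phi_\lambda(\Delta) \le \E\exp\big(C_1\lambda^2 K^2\norm{Aa}_2^2\big) = e^\mu\,\E\Phi_\mu(S_a), \qquad S_a := \norm{Aa}_2^2 - 1.
$$
Since $\norm{a}_2=1$, Lemma~\ref{lem: mgf} gives $\E\Phi_\mu(S_a)\le \exp(C_2\mu^2 K^2)$ whenever $|\mu|\le c/K^2$. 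Constraining $|\lambda|\le c'/K^2$ ensures this, and on the admissible range $\mu^2 K^2 \lesssim \mu$, so the product collapses to $\E\Phi_\lambda(\Delta)\le \exp(CK^2\lambda^2)$. The tail bound then follows from the Chernoff optimization of Lemma~\ref{lem: bernstein} applied verbatim to $\Delta$ and $-\Delta$.

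The only genuinely new content compared to Lemma~\ref{lem: mgf} is the initial algebraic reduction, and this is also where the column-normalization hypothesis is indispensable: without $\norm{A_i}_2 = 1$, the term $\sum_i(x_i^2-y_i^2)\norm{A_i}_2^2$ would survive as a sum of independent centered sub-exponentials, and this is precisely the kind of term responsible for the additional $\sqrt{\log K}$ factor in Theorem~\ref{thm: old result}. The fact that the second direction $b$ differs from $a$ creates no real difficulty: $b$ enters only through $\norm{b}_2\le 2$, which is absorbed into constants, while the orthogonality $\langle a,b\rangle = 0$ keeps $\Delta$ genuinely off-diagonal so that the decoupling-plus-recursion scheme of Lemma~\ref{lem: mgf} closes up unchanged.
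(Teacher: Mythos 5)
Your argument is correct and essentially identical to the paper's: same algebraic cancellation of the diagonal term (using $\norm{A_i}_2=1$ and $\norm{x}_2=\norm{y}_2=1$), same decoupling into $\ip{A\cdot}{A'\cdot}$, same conditional-subgaussian integration over $A'$ feeding back into the MGF bound of Lemma~\ref{lem: mgf}, and same Chernoff step to convert the MGF bound into the mixed tail. The only cosmetic difference is that you keep the unit vector $(x-y)/\norm{x-y}_2$ on the $A$-side after conditioning, so that Lemma~\ref{lem: mgf} applies directly, whereas the paper keeps $A(x+y)$ and absorbs $\norm{x+y}_2\le 2$ into constants; both routes work.
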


\begin{proof}
Set
\[u := x+y, \quad v := x - y.\]

Then

\[\norm{Ax}_2^2 -\norm{Ay}_2^2 = \ip{Au}{Av} = \sum_{i} \ip{A_i}{A_i} u_i v_i +  \sum_{i\neq j} \ip{A_i}{A_j} u_i v_j =: I + II.\]
Observe that $u_i v_i = x_i^2 - y_i^2$ and $\ip{A_i}{A_i} = \norm{A_i}_2^2 = 1$.  Thus
\[I = \sum_i x_i^2 - y_i^2 = \norm{x}_2^2 - \norm{y}_2^2 = 1 - 1 = 0.\]

Now, set $\hat{v} = v/\norm{v}_2$ so

\[\Delta = \frac{\norm{Ax}_2^2 - \norm{Ay}_2^2}{\norm{v}_2} =  \sum_{i\neq j} \ip{A_i}{A_j}  u_i \hat{v}_i.\]

Using decoupling (\cite[Exercise~6.1.4]{hdp-book}) as in the proof of Lemma \ref{lem: mgf}, we let $A'$ be an independent copy of $A$ and get
$$
\E\Phi_\l(\Delta)
\le \E \Phi_\l  \left[4 \ip{Au}{A'\hat{v}}\right].
$$

As in the proof of Lemma~\ref{lem: mgf}, taking the conditional expectation with respect to $A'$ gives
\[\E\Phi_\l(\Delta) \leq \E\exp(C \l^2 K^2 \norm{A u}_2^2).\]
Observe that $\norm{u}_2 =\norm{x + y}_2 \leq \norm{x}_2 + \norm{y}_2 = 2$ and assume $\abs{\l} \leq 1/(C K^2)$.  Then $C \l^2 K^2 \leq \abs{\l}$ and by Lemma~\ref{lem: mgf}
\[\E\exp(C \l^2 K^2 \norm{A u}^2) \leq \exp(C \l^2 K^2 ).\]

Thus, 
\[\E\Phi_\l(\Delta) \leq \exp(C \l^2 K^2) \qquad \mbox{for} \quad \abs{\l} \leq \frac{c}{K^2}.\]

To finish, observe that this is exactly the same bound on the MGF that we had previously for $S = \norm{A x}_2^2 -1$ (as in Lemma \ref{lem: mgf}).  Thus, it can be converted into the same tail bound as in Lemma~\ref{lem: bernstein}, but with $S$ replaced by $\Delta$, as desired.
\end{proof}

\subsection{Vectors on the sphere, original process}

We now show that the random process $Z_x = \abs{\norm{Ax}_2 - 1}$ has subgaussian increments for vectors on the sphere, i.e., we show that
\[\norm{Z_x - Z_y}_{\psi_2} \leq C K \norm{x-y}_2\]
when $\norm{x}_2 = \norm{y}_2 =1$.  

This follows from the following lemma, together with the observation that
\[\abs{Z_x - Z_y} = \abs{\abs{\norm{Ax}_2 - 1} - \abs{\norm{Ay}_2 - 1}} \leq \abs{\norm{Ax}_2 -\norm{Ay}_2}.\]

\begin{lemma} \label{lem: increments on sphere} Fix any pair of vectors $x,y \in S^{n-1}$.  Then
\[\norm*{\norm{Ax}_2 - \norm{Ay}_2}_{\psi_2} \leq C K \norm{x - y}_2.\]
\end{lemma}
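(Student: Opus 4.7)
The goal is to show $\norm{D}_\psitwo \le C K \e$, where I write $D := \norm{Ax}_2 - \norm{Ay}_2$ and $\e := \norm{x-y}_2$. After dismissing the trivial case $\e = 0$, by the standard tail characterization of the $\psitwo$ norm it suffices to prove
\[
\Prob*{\abs{D} \ge t \e} \le C\exp\!\bigl(-c t^2/K^2\bigr) \qquad \text{for all } t \ge 0.
\]
My plan is to split the range of $t$ and use a different inequality in each regime.

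For moderate $t$ (say $t \le 2$), I use the polarization identity: setting $\Sigma := \norm{Ax}_2 + \norm{Ay}_2$ and letting $\Delta$ denote the variable of Lemma~\ref{lem: squared process}, one has
$D \cdot \Sigma = \norm{Ax}_2^2 - \norm{Ay}_2^2 = \Delta \cdot \e$,
so $\abs{D}/\e = \abs{\Delta}/\Sigma$. On the event $\{\Sigma \ge 1\}$ this gives $\abs{D}/\e \le \abs{\Delta}$, and Lemma~\ref{lem: squared process} yields $\Prob*{\abs{D}/\e \ge t,\, \Sigma \ge 1} \le 2\exp[-c(t^2 \wedge t)/K^2]$. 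On the complementary event $\{\Sigma < 1\}$, at least one of $\norm{Ax}_2, \norm{Ay}_2$ lies below $1/2$; Proposition~\ref{prop: conc of norm} then gives $\Prob*{\Sigma < 1} \le 4\exp(-c/K^2)$. A short case split (separating $t \le 1$ from $1 \le t \le 2$) shows the sum of these two contributions is bounded by $C\exp(-c' t^2/K^2)$ for every $t \le 2$.

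For large $t$ (say $t \ge 2$), the polarization argument only gives subexponential decay and a different inequality is needed. Here I use the reverse triangle inequality $\abs{D} \le \norm{A(x-y)}_2 = \e \norm{A\hat v}_2$, where $\hat v := (x-y)/\e \in S^{n-1}$. Applying Proposition~\ref{prop: conc of norm} to the unit vector $\hat v$,
\[
\Prob*{\abs{D}/\e \ge t}
\;\le\; \Prob*{\norm{A\hat v}_2 \ge t}
\;\le\; 2\exp\!\bigl[-c(t-1)^2/K^2\bigr]
\;\le\; 2\exp\!\bigl[-ct^2/(4K^2)\bigr]
\]
for $t \ge 2$. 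Combining the two regimes delivers the desired tail bound.

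The main obstacle is the large-$t$ regime. The squared-process variable $\Delta$ is, in essence, a bilinear form in the subgaussian columns of $A$, whose tails are genuinely subexponential, so the polarization identity on its own cannot produce a subgaussian tail for $D/\e$. Conversely, the reverse triangle inequality does supply a subgaussian tail, but only around the typical value $\norm{A\hat v}_2 \approx 1$, so it is useless for small $t$. It is the interplay of the two bounds---polarization for moderate deviations of $\Delta$, and reverse triangle for large deviations of $\norm{A\hat v}_2$---that yields the required subgaussian control.
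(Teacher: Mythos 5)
Your proof is correct and follows essentially the same two-regime decomposition as the paper's: a reverse-triangle-inequality bound combined with Proposition~\ref{prop: conc of norm} for large $t$, and a polarization argument combined with Lemma~\ref{lem: squared process} plus a lower bound on the norms for moderate $t$. The only cosmetic difference is in the moderate-$t$ case: the paper bounds $\abs{\Delta}/\Sigma \le \abs{\Delta}/\norm{Ax}_2$ and conditions on the single event $\{\norm{Ax}_2 \le 1/2\}$, whereas you keep the full denominator $\Sigma = \norm{Ax}_2 + \norm{Ay}_2$ and union-bound over both $\norm{Ax}_2 < 1/2$ and $\norm{Ay}_2 < 1/2$; both routes give the same subgaussian tail with the same dependence on $K$.
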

\begin{proof}
This proof is essentially the same as the proof of \cite[Lemma 5.4]{LMPV}.  We repeat it here for convenience.

Assume $x \neq y$ (as otherwise the result clearly holds) and set
\[Q:= \frac{\abs{\norm{Ax}_2 - \norm{Ay}_2}}{\norm{x - y}_2}.\]

We wish to show that 
\begin{equation}\label{eq: subgauss tail}
\Pr{Q \geq t} \leq C \exp\left(-C \frac{t^2}{K^2}\right) \quad \mbox{for } t \geq 0. 
\end{equation}

We show this separately for small $t$ and large $t$.  Set $v = x - y$ and $\hat{v} = v/\norm{v}_2$.  

\textit{Case 1:}  $t \geq 2$.  

By the triangle inequality, 
$\abs{\norm{Ax}_2 - \norm{Ay}_2} \leq \norm{A(x - y)}_2 = \norm{Av}_2$.  Thus, 
\[Q \leq \norm{A\hat{v}}_2.\]

Then,
\[\Pr{Q \geq t} \leq \Pr{\norm{A \hat{v}}_2 \geq t} = \Pr{\norm{A \hat{v}}_2-1 \geq t-1} \leq \Pr{\norm{A \hat{v}}_2-1 \geq t/2}\]
since $t \geq 2$.  Recall that  $\norm{\norm{A \hat{v}}_2-1}_{\psi_2} \lesssim K$ by Proposition \ref{prop: conc of norm} and so 
\[\Pr{\norm{A \hat{v}}_2 -1 \geq t/2} \leq 2\exp\left(-C\frac{t^2}{K^2}\right)\]
as desired.

\textit{Case 2:} $t < 2 $.
Observe that

\[\frac{\abs{\norm{Ax}_2 - \norm{Ay}_2}}{\norm{x - y}_2} =  \frac{\abs{\Delta}}{\norm{Ax}_2 + \norm{Ay}_2} \leq \frac{\abs{\Delta}}{\norm{Ax}_2} \]
with $\Delta$ defined as in Lemma~\ref{lem: squared process}.  

Define the events $F = \{\abs{\Delta} \geq t/2\}$ and $G=\{\norm{Ax}_2 \leq 1/2\}$ so that 
\[\left\{\frac{\abs{\Delta}}{\norm{Ax}_2} \geq t\right\} \subset F\cup G.\]
By the union bound 
\[\Pr{\frac{\abs{\Delta}}{\norm{Ax}_2} \geq t} \leq \Probevent{F} + \Probevent{G}.\]
By Lemma~\ref{lem: squared process}, $\Probevent{F} \leq 2\exp(-C t^2/K^2)$ and by Proposition~\ref{prop: conc of norm},
\[\Probevent{G} = \Pr{\norm{Ax}_2 - 1 \leq -\frac{1}{2}} \leq 2\exp\left(-\frac{C}{K^2}\right) \leq 2 \exp\left(-\frac{C t^2}{K^2}\right)\]
since $t/2 \leq 1$ (we have absorbed two factors of 4 into the numeric constant $C$).

Thus, 
\[\Probevent{F} + \Probevent{G} \leq 4 \exp\left(-C \frac{t^2}{K^2}\right)\]
giving the desired bound on $\Pr{Q \geq t}$.
\end{proof}

\subsection{Arbitrary vectors, original process}
We now show that 
\begin{equation} \label{eq: increments}
\norm{Z_x - Z_y}_{\psi_2} \lesssim K \norm{x - y}_2
\end{equation}
 for any $x,y\in \R^n$, thus completing the proof of Theorem \ref{thm: subgaussian increments}.  Again, we present this for convenience, since the argument is already made in \cite{LMPV} (see also \cite[Section~9.1.4]{hdp-book}). 
 
Without loss of generality, assume $\norm{x}_2\geq \norm{y}_2$.  Assume $y \neq 0$ as otherwise Equation \eqref{eq: increments} is already shown by Proposition \ref{prop: conc of norm}.  By a simple rescaling argument, we may assume $\norm{y}_2=1$.  Let $\hat{x} = x/\norm{x}$, i.e., the projection onto the sphere (or the convex hull of the sphere).  Since projection onto a convex set is contractive, and by definition of projection, we have
\begin{equation}\label{eq: contractive}
\norm{\hat{x} - y}_2 \leq \norm{x - y}_2 \qquad \mbox{and} \qquad \norm{x - \hat{x}}_2 \leq \norm{x - y}_2.
\end{equation}

By the triangle inequality, $\norm{Z_x - Z_y}_{\psi_2} \leq \norm{Z_x - Z_{\hat{x}}}_{\psi_2} + \norm{Z_{\hat{x}} - Z_y}_{\psi_2}.$  We now use the results from the previous sections to show that each term is bounded by $C K\norm{x - y}_2$, thus completing the proof.  Indeed, by linearity, $Z_x = \norm{x}_2 \cdot Z_{\hat{x}}$.  Further, $\norm{x}_2 - 1 = \norm{x - \hat{x}}_2$ and so  
\[\norm{Z_x - Z_{\hat{x}}}_{\psi_2}  = \norm{x - \hat{x}}_2 \cdot \norm{\norm{A\hat{x}}_2 -1}_{\psi_2} \leq CK \norm{x - \hat{x}}_2 \leq CK \norm{x - y}_2.\]
The first inequality follows by Proposition~\ref{prop: conc of norm} and the second by Equation~\eqref{eq: contractive}.

Next, by Lemma~\ref{lem: increments on sphere} followed by Equation~\eqref{eq: contractive}, 
\[\norm{Z_{\hat{x}} - Z_y}_{\psi_2} \leq C K \norm{\hat{x} - y}_2 \leq CK \norm{x - y}_2.\]

The proof is finished.
\section{Proofs of the material in Sections~\ref{sec: sparse matrices},~\ref{sec: column normalization}} \label{sec: auxiliary proofs}
We begin with a lemma controlling the subgaussian norm of the columns of ``approximately sparse" and ``exactly sparse" matrices, as referenced in Sections~\ref{sec: sparse matrices},~\ref{sec: column normalization}.

\begin{lemma} \label{lem: sparse subgauss norm}
Let $s \leq m $ be a positive integer and consider random vectors $X, Y \in \R^m$.  The vector $X$ is drawn uniformly from the set of vectors in $\R^m$ with that have all $0, 1, -1$ entries and exactly $s$ non-zero entries. The entries of $Y$ are independent and take the value $1$ or $-1$ with probability $\frac{s}{2m}$ each, and $0$ with probability $1 - \frac{s}{m}$.  Then,  
\begin{equation}\label{eq: s-sparse vector}
\norm{X}_{\psi_2} \leq \frac{C}{\sqrt{\log(1+ m/s)}}  \quad \mbox{and} \quad  \norm{Y}_{\psi_2} \leq \frac{C}{\sqrt{\log(1+ m/s)}}.
\end{equation}
\end{lemma}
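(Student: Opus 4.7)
The plan is to prove the two bounds in~\eqref{eq: s-sparse vector} separately: first the easier bound for the independent-coordinate vector $Y$ by a direct computation, and then the bound for the exactly sparse vector $X$ via a moment-generating function (MGF) comparison with $Y$. By the definition of the vector subgaussian norm, in each case it suffices to control the one-dimensional marginals $\langle Y, v\rangle$ and $\langle X, v\rangle$ uniformly over $v \in S^{m-1}$.

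For $Y$, I would start from the definition of $\psitwo$. Since $Y_i^2$ is Bernoulli with parameter $p := s/m$,
\[
\E \exp(Y_i^2/t^2) = (1-p) + p\,e^{1/t^2},
\]
which equals $2$ exactly when $t^2 = 1/\log(1+1/p) = 1/\log(1+m/s)$. Hence $\norm{Y_i}_\psitwo \asymp 1/\sqrt{\log(1+m/s)}$. Because the $Y_i$ are independent and centered, applying Lemma~\ref{lem: sum of subgaussians} to $\langle Y, v\rangle = \sum_i v_i Y_i$ yields
\[
\norm{\langle Y, v\rangle}_\psitwo^2 \le C \sum_i v_i^2 \norm{Y_i}_\psitwo^2 \le \frac{C}{\log(1+m/s)},
\]
which is the second bound in~\eqref{eq: s-sparse vector}.

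For $X$ the coordinates are dependent, so Lemma~\ref{lem: sum of subgaussians} does not apply directly. Instead, I would show that the MGF of $\langle X, v\rangle$ is dominated by that of $\langle Y, v\rangle$ at every $\lambda \in \R$. Writing $X_i = \epsilon_i \eta_i$ with $(\epsilon_i)$ i.i.d.\ Rademacher independent of $\eta$, the uniform $\{0,1\}$-indicator of an $s$-subset $I \subseteq [m]$, and taking expectation first over $\epsilon$ gives
\[
M_X(\lambda) := \E e^{\lambda \langle X, v\rangle} = \E_\eta \prod_i (1+\eta_i b_i), \qquad b_i := \cosh(\lambda v_i) - 1 \ge 0.
\]
Expanding the product in the $b_i$'s reduces the MGF comparison to the elementary hypergeometric estimate
\[
\Pr{J \subseteq I} = \frac{\binom{m-|J|}{s-|J|}}{\binom{m}{s}} \le p^{|J|},
\]
valid for every $J \subseteq [m]$ because $(s-j)/(m-j) \le s/m$ whenever $s \le m$ and $j \ge 0$. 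Since every $b_i \ge 0$, this gives $M_X(\lambda) \le \prod_i (1+p b_i) = M_Y(\lambda)$. The standard subgaussian--MGF equivalence then turns the $\psitwo$ bound from the previous paragraph into $M_Y(\lambda) \le \exp(C\lambda^2/\log(1+m/s))$, transfers it to $M_X$, and converts back to $\norm{\langle X, v\rangle}_\psitwo \le C/\sqrt{\log(1+m/s)}$, using that $\langle X, v\rangle$ is centered by the Rademacher signs.

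I expect the MGF comparison $M_X \le M_Y$ to be the main (mild) obstacle. The tempting naive reduction $X \stackrel{d}{=} Y \mid \norm{Y}_0 = s$ would lose a factor of $\sqrt{s}$ through the Stirling-type estimate $\Pr{\norm{Y}_0 = s} \asymp 1/\sqrt{s}$, which would spoil the bound once $m \gg s$. Working directly with the hypergeometric probabilities $\Pr{J \subseteq I}$ sidesteps this loss and is essentially elementary; the same inequality can also be read off from the negative association of the indicator vector $\eta$.
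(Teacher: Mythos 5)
Your proof is correct and takes a genuinely different route from the paper. The paper also reduces the bound on $\norm{X}_{\psitwo}$ to that on $\norm{Y}_{\psitwo}$, but by comparing moments via conditioning: with $G := \{\norm{Y}_0 \ge s\}$ it uses that $s$ is a median of $\norm{Y}_0 \sim \Binom(m, s/m)$, so $\Pr{G} \ge 1/2$, and then
\[
\E\abs{\ip{Y}{u}}^p \;\ge\; \E\bigl[\abs{\ip{Y}{u}}^p \mid G\bigr] \cdot \Pr{G} \;\ge\; \tfrac{1}{2}\,\E\bigl[\abs{\ip{Y}{u}}^p \mid \norm{Y}_0 = s\bigr] \;=\; \tfrac{1}{2}\,\E\abs{\ip{X}{u}}^p,
\]
concluding via the moment characterization of the $\psitwo$ norm. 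Note this is not the naive reduction you warn against: the $\sqrt{s}$ loss from $\Pr{\norm{Y}_0 = s}$ is avoided by using $\Pr{\norm{Y}_0 \ge s} \ge 1/2$ instead. The paper's middle inequality, however, rests on an implicit monotonicity of $\E[\abs{\ip{Y}{u}}^p \mid \norm{Y}_0 = k]$ in $k$, which is left unjustified (one can prove it by coupling a $(k{+}1)$-sparse vector to a $k$-sparse one and applying Jensen to the convex function $\abs{\cdot}^p$, using that the extra Rademacher coordinate is symmetric and independent). Your MGF comparison $M_X(\lambda) \le M_Y(\lambda)$ via the hypergeometric estimate $\Pr{J \subseteq I} \le p^{|J|}$ is self-contained, holds for every $\lambda$ simultaneously, and sidesteps that monotonicity issue entirely, at the cost of the slightly longer expansion in the $b_i$'s; both arguments are sound and of comparable length.
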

\begin{proof}
It is not hard to show that by definition, $\norm{Y_i}_{\psi_2} = 1/\sqrt{\log(1 + m/s)}$.  The bound on $\norm{Y}_{\psi_2}$ then follows from \cite[2.6.1]{hdp-book} and Definition~\eqref{eq: sub-gauss vector}.  

We now bound the moments of the marginals of $X$ by the moments of the marginals of $Y$, thus controlling $\norm{X}_{\psi_2}$ by $\norm{Y}_{\psi_2}$. Below, given an event $F$ and a random variable $Z$, we use the notation
\[\norm{Z|F}_{\psi_2} := \inf\{t > 0: \E[\exp(Z^2/t^2)|F]\}\]
i.e., it is the subgaussian norm of $Z$ conditioned on $F$.

Let $u \in S^{n-1}$ and let $G$ be the event $G := \{\norm{Y}_0 \geq s\}$ where we use $\norm*{\cdot}_0$ for the number of nonzero entries of a vector.  Since $s$ is the median of $\norm{Y}_0$, we have $\Pr{G} \geq 1/2$ (see \cite{kaas1980mean}).  Let $p \geq 1$ and observe that
\[\E\abs{\ip{Y}{u}}^{p} \geq  \E\left[\abs{\ip{Y}{u}}^{p}\Big|G\right]\cdot \Pr{G} \geq \E\left[\abs{\ip{Y}{u}}^p \Big| \norm{Y}_0 = s\right] \cdot \frac{1}{2} = \E\abs{\ip{X}{u}}^{p}\cdot \frac{1}{2}.\]
The last equality follows since conditioned on $\norm{Y}_0 = s$, $Y$ has the same distribution as $X$.  We have shown that
\[\E\abs{\ip{X}{u}}^{p} \leq 2 \E\abs{\ip{Y}{u}}^{p}.\]
It follows from the moment-based characterization of the subgaussian norm \cite[Equation 2.15]{hdp-book} that $\norm{X}_{\psi_2} \leq C \norm{Y}_{\psi_2}$ as desired.
\end{proof}




\subsection{Proof of Corollary~\ref{cor: normalize cols}}
Our strategy is to condition on the event $F$ that the column norms of $A$ are lower bounded, show that conditionally on this event $\tilde{A}$ has independent, mean-zero, subgaussian columns, and then apply Corollary \ref{cor: lambda cols}.

The norm of a vector with independent subgaussian entries is well concentrated.  Indeed, by \cite[Theorem 4.1]{jeong2022sub}
the columns of $A$ satisfy,
\[\norm{\norm{A_i}_2 - \sqrt{m}}_{\psi_2} \lesssim K\sqrt{\log K} \qquad i = 1, 2, \hdots, n\]
and thus, 
\[\Pr{\norm{A_i}_2 \leq \frac{\sqrt{m}}{2}} = \Pr{\norm{A_i}_2 - \sqrt{m} \leq -\frac{\sqrt{m}}{2}} \leq 2 \exp\left(-C \frac{m}{K^2 \log K}\right).\]
Then, by the union bound, the event
\[F:=\left\{\min_{i = 1, 2, \hdots, n} \norm{A_i}_2 \geq \frac{\sqrt{m}}{2}\right\}\]
satisfies 
\[\Pr{F} \geq 1 - 2n\exp\left(-C\frac{m}{K^2 \log K}\right)=1 - 2\exp\left(\log(n) -C\frac{m}{K^2 \log K}\right).\]
By assumption, $m \geq CK^2 \log(K)\cdot \log(n)$, and so the $\log(n)$ term is insignificant, only changing the constant:
\[\Pr{F} \geq  1 -2 \exp \left(-C\frac{m}{K^2 \log K}\right) \geq \frac{1}{2}.\]

Now condition on the event $F$.  Clearly, the columns of $A$ are (conditionally) non-zero.  Further,

\begin{equation} \label{eq: conditional sub-gauss norm}
\norm{\tilde{A_i}|F}_{\psi_2} = \left\|\frac{\sqrt{m}}{\norm{A_i}_2} A_i \bigg |F\right\|_{\psi_2} \leq 2 \norm{A_i|F}_{\psi_2} \leq C \norm{A_i}_{\psi_2}.
\end{equation}

The first inequality follows because $\sqrt{m}/\norm{A_i} \leq 2$ (conditionally).  The second inequality follows because 
conditioning on an event with probability at least $0.5$ cannot increase the subgaussian norm of $A_i$ by more than a constant factor.  For instance, this follows by checking the moment condition in \cite[Proposition 2.5.2]{hdp-book}.

Thus, by \cite[Proposition 2.6.1]{hdp-book},
\[\norm{\tilde{A_i}|F}_{\psi_2} \leq C\norm{A_i}_{\psi_2} \leq C K.\]
Further, since the entries of $A_i$ are symmetric, and thus the signs of those entries are independent from their magnitudes, the conditional entries of $\tilde{A_i}$ are still symmetric.  It follows that $\E[\tilde{A_i} |F] = 0$.  Summarizing, conditional on $F$, $\tilde{A}$ has the following properties
\[\norm{\tilde{A_i}}_2 = \sqrt{m}, \quad \norm{\tilde{A_i}|F}_{\psi_2} \leq CK , \quad \E[\tilde{A_i}|F] = 0.\]

Clearly, the columns of $\tilde{A}$ remain independent conditional on $F$ and so we are in the setting of Corollary \ref{cor: lambda cols} with $\lambda = \sqrt{m}$.  The proposition follows.

\begin{remark}[Replacing $m \geq C K^2 \log(K) \cdot \log(n)$ with $m \geq C K^2 \log(K)$] In Equation~\eqref{eq: conditional sub-gauss norm}, conditioning on $F$ is equivalent to conditioning on the event that $\norm{A_i}_2 \geq m/\sqrt{2}$ (by independence).  Thus, we only need $\Pr{\norm{A_i}_2 \geq m/\sqrt{2}} \geq 1/2$ for Equation~\eqref{eq: conditional sub-gauss norm} to hold.  This only requires $m \geq C K^2 \log(K)$, rather than $m \geq C K^2 \log(K) \cdot \log(n)$ (since we may remove the union bound involved in controlling $\Probevent{F}$). Thus Corollary~\ref{cor: normalize cols} holds with the milder assumption on $m$, but without a lower bound on $\Probevent{F}$.  Similar arguments allow us to take the mild assumption $s \geq 1$ in Corollary~\ref{cor: sparse normalized columns} at the expense of losing the lower bound on $\Probevent{F}$.
\end{remark}

\section{Conclusion} \label{sec: conclusion}
We have shown that random matrices with normalized independent columns have similar behaviour that of random matrices with independent rows, except with better dependence on the subguassian norm.  We conclude with an open question: Can the assumption of normalized columns be removed?  This question is made concrete with the following conjecture.  

\begin{conjecture}
  Let $A$ be an $m \times n$ matrix whose columns $A_i$ are independent, mean zero, subgaussian random vectors in $\R^m$ with $\norm*{A_i}_{\psi_2} \leq K$. Then, for any subset $T \subset \R^n$, we have
    $$
    \E \sup_{x \in T} \abs[\Big]{\norm*{Ax}_2 - \norm*{Dx}_2} \le C(K) \gamma(T)
    $$
    where $D$ is the diagonal matrix with $D_{i,i} = \norm{A_i}_2$ and $C(K)$ depends only on $K$.
\end{conjecture}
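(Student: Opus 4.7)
The plan is to parallel the proof of Theorem~\ref{thm: subgaussian increments}, establishing that the modified process $Z_x := \norm{Ax}_2 - \norm{Dx}_2$ has subgaussian increments $\norm{Z_x - Z_y}_\psitwo \lesssim C(K)\norm{x-y}_2$, and then invoking Talagrand's comparison inequality (Theorem~\ref{theorem_majorm}).

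The starting observation is the identity
\[
\norm{Ax}_2^2 - \norm{Dx}_2^2 \;=\; \sum_{i \ne j} x_i x_j \ip{A_i}{A_j} \;=:\; \widetilde{S}_x,
\]
so the ``diagonal'' contribution that previously required the normalization $\norm{A_i}_2=1$ in Lemma~\ref{lem: mgf} has now been subtracted off by construction, and only the off-diagonal chaos remains. The decoupling and conditioning steps from Lemma~\ref{lem: mgf} go through verbatim and yield
\[
\E \exp(\lambda \widetilde{S}_x) \;\le\; \E \exp\!\bigl(C\lambda^2 K^2 \norm{x}_2^2 \norm{Ax}_2^2\bigr) \;=\; \E \exp\!\bigl(C\lambda^2 K^2 \norm{x}_2^2(\norm{Dx}_2^2+\widetilde{S}_x)\bigr).
\]
A Cauchy--Schwarz split followed by the Jensen self-bounding step at the end of Lemma~\ref{lem: mgf} absorbs the $\widetilde{S}_x$ term on the right, leaving, for $|\lambda|\le c/(K^2\norm{x}_2^2)$,
\[
\E \exp(\lambda \widetilde{S}_x) \;\le\; \E \exp\!\bigl(C'\lambda^2 K^2 \norm{x}_2^2 \norm{Dx}_2^2\bigr).
\]
Since $\norm{Dx}_2^2 = \sum_i r_i^2 x_i^2$ is a sum of independent subexponentials whose MGFs are controlled via the concentration of norm ($\norm{r_i-\E r_i}_\psitwo \lesssim K$, see \cite[Theorem~3.1.1]{hdp-book}), this final quantity can be estimated. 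Combined with the concentration of $\norm{Dx}_2^2$ around $\sigma(x)^2:=\sum_i(\E r_i^2)x_i^2$ and the identity $\norm{Ax}_2-\norm{Dx}_2 = \widetilde{S}_x/(\norm{Ax}_2+\norm{Dx}_2)$, this should deliver the pointwise analog of Proposition~\ref{prop: conc of norm}: $\norm{\norm{Ax}_2-\norm{Dx}_2}_\psitwo \le C(K)\norm{x}_2$.

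For the increments, writing $u=x+y$ and $v=x-y$ as in Lemma~\ref{lem: squared process}, a direct calculation gives $\widetilde{S}_x-\widetilde{S}_y = \sum_{i\ne j} u_i v_j \ip{A_i}{A_j}$. Decoupling turns this into a multiple of $\ip{Au}{A'v}$, whose conditional subgaussian norm is $\lesssim K\norm{v}_2\norm{Au}_2$, and $\norm{Au}_2^2 = \norm{Du}_2^2+\widetilde{S}_u$ reduces us to the same form just analyzed. The extension from the ``sphere'' case to arbitrary vectors should follow the template of Section~\ref{sec: proofs}.

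The main obstacle is controlling the random factor $\norm{Dx}_2^2$ produced by the Cauchy--Schwarz split. In Lemma~\ref{lem: mgf} the assumption $\norm{A_i}_2 = 1$ forces this factor to equal $\norm{x}_2^2$, which is exactly what gives linear dependence on $K$. Here $\E\norm{Dx}_2^2$ can be as large as $mK^2\norm{x}_2^2$, so the challenge is to show that its concentration around the mean (combined with the subgaussian tails of the $r_i$) is sharp enough for the final constant to depend only on $K$, and not on $m$ or $n$. A naive reduction to the normalized case---conditioning on $D$ and applying Theorem~\ref{thm: main} to $U:=AD^{-1}$---gives only a bound of the form $C\,\E_D \gamma(DT)$, which can exceed $\gamma(T)$ by a factor of order $K\sqrt{m}$ in general (take $T=S^{n-1}$ with Gaussian $A$). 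Any successful proof must therefore exploit the cancellations inside the chaos $\widetilde{S}_x$ rather than bound it in a pathwise manner through the normalized matrix.
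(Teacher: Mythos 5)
This statement is labeled as a \emph{Conjecture} in the paper and is explicitly posed as an open problem; the paper offers no proof of it, so there is no paper proof against which to compare.

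Your proposal is an honest plan rather than a complete argument, and much of the algebra is right: the identity $\norm{Ax}_2^2-\norm{Dx}_2^2=\sum_{i\ne j}x_i x_j\ip{A_i}{A_j}$ correctly cancels the diagonal without assuming fixed column norms, the decoupling step carries over unchanged, and your observation that the naive reduction via $AD^{-1}$ only bounds the left-hand side by $\E_D\gamma(DT)$ (which can exceed $\gamma(T)$ by a factor growing with $m$) is an accurate diagnosis of why no black-box application of Theorem~\ref{thm: main} can succeed. However, the proposal contains a genuine gap, which you yourself flag. After the Cauchy--Schwarz and Jensen self-bounding steps, you are left with a term of the form $\E\exp\bigl(C\lambda^2 K^2\norm{x}_2^2\norm{Dx}_2^2\bigr)$, and nothing in the proposal controls it: the Jensen trick in Lemma~\ref{lem: mgf} relies essentially on the constant $1$ sitting where your random quantity $\norm{Dx}_2^2$ now sits, and since $\E\norm{Dx}_2^2$ is generically of order $mK^2\norm{x}_2^2$, the resulting MGF estimate for $\widetilde{S}_x$ necessarily grows with $m$. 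That growth is actually consistent with the true size of the chaos $\widetilde{S}_x$, so the deeper issue is the subsequent reduction from the squared process back to $\norm{Ax}_2-\norm{Dx}_2$: the paper's elementary implication ($\abs{z-1}\ge\delta\Rightarrow\abs{z^2-1}\ge\delta\vee\delta^2$, used in Proposition~\ref{prop: conc of norm}) breaks because the centering constant $1$ is replaced by the random quantity $\norm{Dx}_2$, which must be simultaneously lower-bounded on the relevant events, and columns with atypically small norms make such a lower bound delicate. The sentence that this ``should deliver the pointwise analog of Proposition~\ref{prop: conc of norm}'' is precisely where the conjecture is open; no mechanism is supplied to obtain the needed joint control of $\widetilde{S}_x$ and $\norm{Dx}_2$, and the conjecture remains unproved.
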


\bibliography{random-matrices}
\bibliographystyle{abbrv}


\end{document}